\newenvironment{eq}{\begin{equation}}{\end{equation}} 
\theoremstyle{theorem}
\newtheorem{thm}{Theorem}[section]
\newtheorem{cor}{Corollary}[thm]
\newtheorem{lem}[thm]{Lemma}
\newtheorem{prop}[thm]{Proposition}
\newtheorem{defi}[thm]{Definition}
\theoremstyle{definition}
\newtheorem{rmq}[thm]{Remark}
\newtheorem{exe}[thm]{Example}
\theoremstyle{remark}
\newenvironment{rmq*}
 {\pushQED{\qed}\rmq}
 {\popQED\endrmq}
\newenvironment{exe*}
 {\pushQED{\qed}\exe}
 {\popQED\endexe}
\renewcommand{\bf}[1]{\boldsymbol{#1}}
\newcommand{\bbC}{\mathbb{C}}
\newcommand{\bbE}{\mathbb{E}}
\newcommand{\bbN}{\mathbb{N}}
\newcommand{\bbP}{\mathbb{P}}
\newcommand{\bbR}{\mathbb{R}}
\newcommand{\bbT}{\mathbb{T}}
\newcommand{\bbU}{\mathbb{U}}
\newcommand{\bbZ}{\mathbb{Z}}
\newcommand{\bfX}{\mathbf{X}}
\newcommand{\bfY}{\mathbf{Y}}
\newcommand{\bfZ}{\mathbf{Z}}
\renewcommand{\d}{\mathrm{d}}
\newcommand{\A}{\mathscr{A}}
\newcommand{\B}{\mathscr{B}}
\newcommand{\F}{\mathscr{F}}
\newcommand{\calH}{\mathcal{H}}
\renewcommand{\P}{\mathscr{P}}
\newcommand{\calP}{\mathcal{P}}
\newcommand{\calT}{\mathcal{T}}
\renewcommand{\O}{\Omega}
\newcommand{\e}{\epsilon}
\renewcommand{\k}{\kappa}
\renewcommand{\l}{\lambda}
\renewcommand{\o}{\omega}
\newcommand{\s}{\sigma}
\newcommand{\Id}{\mathrm{Id}}
\newcommand{\Exp}{\mathrm{Exp}}
\renewcommand{\t}[1]{\widetilde{#1}}
\newcommand{\bbm}[1]{\mathbbm{#1}}
\renewcommand{\frak}[1]{\mathfrak{#1}}
\renewcommand{\to}{\longrightarrow}
\newcommand{\arr}{\rightarrow}
\renewcommand{\phi}{\varphi}
\newcommand*{\quotient}[2]
{\ensuremath{
    #1/\!\raisebox{-.65ex}{\ensuremath{#2}}}}
\def\res#1#2{\mathchoice
              {\setbox1\hbox{${\displaystyle #1}_{\scriptstyle #2}$}
              \restrictionaux{#1}{#2}}
              {\setbox1\hbox{${\textstyle #1}_{\scriptstyle #2}$}
              \restrictionaux{#1}{#2}}
              {\setbox1\hbox{${\scriptstyle #1}_{\scriptscriptstyle #2}$}
              \restrictionaux{#1}{#2}}
              {\setbox1\hbox{${\scriptscriptstyle #1}_{\scriptscriptstyle #2}$}
              \restrictionaux{#1}{#2}}}
\def\restrictionaux#1#2{{#1\,\smash{\vrule height 1\ht1 depth 1.1\dp1}}_{\,#2}} 
\def\commutatif{\ar@{}[rd]|{\circlearrowleft}}
\definecolor{main}{HTML}{5989cf}
\newtcolorbox{boxB}{
    fontupper = \bf, % font color
    boxrule = 1.5pt,
    colframe = main,
    rounded corners,
    arc = 5pt   % corners roundness
}
\begin{document}

\title{Confined Poisson extensions}

\author{Séverin Benzoni, Emmanuel Roy, Thierry de la Rue
\\}
\date{}

\maketitle

\begin{abstract}
This paper follows on from \cite{benzoniConfined}, where we introduced the notion of \emph{confined extensions}, and our purpose is to widen the context in which such extensions appear. We do so in the setup of Poisson suspensions: we take a $\s$-finite measure-preserving dynamical system $(X, \mu, T)$ and a compact extension $(X \times G, \mu \otimes m_G, T_\phi)$, then we consider the corresponding Poisson extension $((X \times G)^*, (\mu \otimes m_G)^*, (T_\phi)_*) \overset{}{\to} (X^*, \mu^*, T_*)$. Our results give two different conditions under which that extension is confined. Finally, to show that those conditions are not void, we give an example of a system $(X, \mu, T)$ and a cocycle $\phi$ such that the compact extension $(X \times G, \mu \otimes m_G, T_\phi)$ has an infinite ergodic index.
\end{abstract}

\tableofcontents

\section{Introduction}

\subsection{Motivations}

This paper investigates the concept of extensions of measure-preserving dynamical systems, specifically, extensions given by a factor map $\pi: (Z, \rho, R) \arr (X, \mu, T)$. We mean that $\bfZ := (Z, \rho, R)$ and $\bfX := (X, \mu, T)$ are invertible measure preserving dynamical systems on standard Borel sets such that $\bfX$ is a factor of $\bfZ$ via $\pi$, and conversely, we also view $\bfZ$ as an extension of $\bfX$. 

This paper is a continuation of the work done in \cite{benzoniConfined}. There, we introduced the notion of confined extensions: they are extensions $(Z, \rho, R) \overset{\pi}{\to} (X, \mu, T)$ such that the only self-joining $\l$ of $(Z, \rho, R)$ in which the law of $\pi \times \pi$ is the product measure $\mu \otimes \mu$, is the product joining $\l = \rho \otimes \rho$ (see Definition \ref{def:confined}).

This notion was first of interest to us in the study of dynamical filtrations, which are filtrations defined on some dynamical system $(X, \mu, T)$ of the form $\F := (\F_n)_{n \leq 0}$ such that each $\F_n$ is $T$-invariant (see \cite{benzoniConfined} for more details). But we also noticed other interesting results on confined extensions. For example, we listed properties $\calP$ that are lifted through confined extensions, i.e. if $(X, \mu, T)$ satisfies $\calP$ and $(Z, \rho, R) \overset{\pi}{\to} (X, \mu, T)$ is confined, then $(Z, \rho, R)$ satisfies $\calP$ (see \cite[Section 3.3]{benzoniConfined}). 

Since we noticed that confined extensions had many interesting properties, we look for examples in which that behavior arises. In \cite{benzoniConfined}, we considered extensions well known in the literature, namely, compact extensions and $T, T^{-1}$-transformations. In both cases, we gave necessary and sufficient conditions for those extensions to be confined. 

In this paper, we give confinement results for a new kind of extension, in the setting of Poisson suspensions. Take $(X, \mu, T)$ a measure preserving dynamical system where $\mu$ is a $\s$-finite measure, but assume that $\mu(X) = \infty$. Consider the probability space $(X^*, \mu^*)$ where $X^*$ is the set of locally finite counting measures of the form $\sum_{i \geq 1} \delta_{x_i}$, with $(x_i)_{i \geq 1} \in X^\bbN$, and $\mu^*$ the law of the Poisson process of intensity $\mu$. One can then define $T_*$ on $(X^*, \mu^*)$ by applying $T$ to each point of the point process. The resulting dynamical system $(X^*, \mu^*, T_*)$ is called the \emph{Poisson suspension over $(X, \mu, T)$}. A factor map $\pi: (Z, \rho, R) \arr (X, \mu, T)$ between infinite measure systems will then yield a factor map between the Poisson suspensions: $\pi_*: (Z^*, \rho^*, R_*) \arr (X^*, \mu^*, T_*)$. The resulting extension is what we call a \emph{Poisson extension}.

We will consider the case where $\bfZ := (X \times G, \mu \otimes m_G, T_\phi)$ is the compact extension given by a cocycle $\phi: X \arr G$, with $G$ a compact group. We recall that the map $T_\phi$ is defined as 
$$T_\phi(x, g) := (Tx, g \cdot \phi(x)).$$
 Our results concern the following Poisson extension:
\begin{eq} \label{eq:poisson_ext}
((X \times G)^*, (\mu \otimes m_G)^*, (T_\phi)_*) \overset{\pi_*}{\to} (X^*, \mu^*, T_*),
\end{eq}
with $\pi: (x, g) \mapsto x$.

In Section \ref{sect:trivial_cocycle}, we consider the case where $\phi(x)$ acts as the identity map, for every $x \in X$. Using a splitting result from \cite{ErgPoissonSplittings}, we prove that in this case, if $(X, \mu, T)$ is of infinite ergodic index, the extension \eqref{eq:poisson_ext} is confined (see Theorem \ref{thm:T_Id_confined}).

In Section \ref{sect:compact_poisson_ext}, we deal with a more general cocycle $\phi$. There, our argument will rely on the assumption that the compact extension $(X \times G, \mu \otimes m_G, T_\phi)$ is of infinite ergodic index. In that case, we make use of Lemma \ref{lem:furstenberg}, which is a well know result from Furstenberg. Through some intricate manipulations, we manage to reduce our problem to a relative unique ergodicity problem for products of the extension $\bfZ \overset{\pi}{\to} \bfX$, so that we can use Furstenberg's lemma (i.e. Lemma \ref{lem:furstenberg}) to prove that \eqref{eq:poisson_ext} is confined (see Theorem \ref{thm:conf_T_phi}).

Since the argument developed in Section \ref{sect:compact_poisson_ext} requires a compact extension $(X \times G, \mu \otimes m_G, T_\phi)$ of infinite ergodic index, in Section \ref{sect:ext_infinte_erg_index}, we give an example of such an extension, showing that Theorem \ref{thm:conf_T_phi} is not void.

\subsection{Basic notions and notation in ergodic theory}
\label{sect:notation}

A \emph{dynamical system} is a quadruple $\bfX := (X, \A, \mu, T)$ such that $(X, \A)$ is a standard Borel space, $\mu$ is a Borel measure which is $\s$-finite, i.e. there exist measurable sets $(X_n)_{n \geq 1}$ such that $\mu(X_n) < \infty$ and $X = \bigcup_{n \geq 1} X_n$, and $T$ is an invertible measure-preserving transformation. Throughout the paper, we will often not specify the $\s$-algebra $\A$, and will write our dynamical systems as a triple of the form $(X, \mu, T)$. 

If we have two systems $\bfX := (X , \mu, T)$ and $\bfZ := (Z, \rho, R)$, a \emph{factor map} is a measurable map $\pi: Z \to X$ such that $\pi_*\rho = \mu$ and $\pi \circ R = T \circ \pi$, $\rho$-almost surely. If such a map exists, we say that $\bfX$ is a \emph{factor} of $\bfZ$. Conversely, we also say that $\bfZ$ is an extension of $\bfX$. Moreover, if there exist invariant sets $X_0 \subset X$ and $Z_0 \subset Z$ of full measure such that $\pi: Z_0 \to X_0$ is a bijection, then $\pi$ is an \emph{isomorphism} and we write $\bfZ \cong \bfX$.

The system $(X, \mu, T)$ is \emph{ergodic} if $T^{-1} A = A$ implies that $\mu(A)=0$ or $\mu(A^c)=0$. It is \emph{conservative} if there is no non-trivial set $A$ such that the $\{T^nA\}_{n \in \bbZ}$ are disjoint. Let $(X, \mu, T)$ be a dynamical system with $\mu(X) = \infty$. If $(X, \mu, T)^{\otimes k}$ is conservative and ergodic, so are all the smaller exponents $(X, \mu, T)^{\otimes j}$, $j \leq k$. The \emph{ergodic index} of $(X, \mu, T)$ is the largest integer $k$ such that $(X, \mu, T)^{\otimes k}$ is conservative and ergodic. If $(X, \mu, T)^{\otimes k}$ is ergodic for every integer $k$, the ergodic index is infinite.

Let $(X, \mu, T)$ be a conservative system. For $A \subset  X$ measurable, we denote the restriction of $\mu$ to $A$ by $\res{\mu}{A} := \mu(\cdot \cap A)$. Since the system is conservative, the return time $N_A(x) := \inf\{k \geq 1 \, | \, T^k \in A\}$ is almost surely finite, allowing us to define the induced transformation 
$$\begin{array}{cccc}
\res{T}{A}: & A & \to & A\\
& x & \longmapsto & T^{N_A(x)}x
\end{array}.$$
Moreover, we have the following:
\begin{lem} \label{lem:conservative_factor}
Let $\bfZ := (Z, \rho, R)$ and $\bfX := (X, \mu, T)$ be two $\s$-finite systems and $\pi: \bfZ \to \bfX$ be a factor map. Then $\bfZ$ is conservative if and only if $\bfX$ is conservative. 
\end{lem}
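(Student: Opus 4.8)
The plan is to deduce both implications simultaneously from the Hopf decomposition of a $\s$-finite system. Recall the standard fact that a $\s$-finite measure-preserving system $(Y,\nu,S)$ is conservative if and only if, for some (equivalently, every) $h\in L^1(\nu)$ with $h>0$ $\nu$-almost everywhere, one has $\sum_{n\geq 0}h\circ S^n=\infty$ $\nu$-almost everywhere; such an $h$ exists precisely because $\nu$ is $\s$-finite. I would apply this characterisation on both $\bfX$ and $\bfZ$, with test functions chosen so as to be compatible with $\pi$.

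Concretely, I would first fix $f\in L^1(\mu)$ with $f>0$ $\mu$-a.e.\ and put $g:=f\circ\pi$ on $Z$. Since $\pi_*\rho=\mu$, a change of variables gives $\int_Z g\,\d\rho=\int_X f\,\d\mu<\infty$, so $g\in L^1(\rho)$; moreover $\{g=0\}=\pi^{-1}\{f=0\}$ is $\rho$-null (again because $\pi_*\rho=\mu$), so $g>0$ $\rho$-a.e. From $\pi\circ R=T\circ\pi$ $\rho$-a.e.\ one gets, for every $n\geq 0$, $\pi\circ R^n=T^n\circ\pi$ $\rho$-a.e.\ (a countable union of $\rho$-null sets), and therefore
\[
\sum_{n\geq 0} g\circ R^n \;=\; \Bigl(\sum_{n\geq 0} f\circ T^n\Bigr)\circ\pi \qquad \rho\text{-a.e.}
\]
Letting $E:=\{x\in X:\ \sum_{n\geq 0} f(T^n x)<\infty\}$, the right-hand side is finite exactly on $\pi^{-1}(E)$, and $\rho(\pi^{-1}(E))=\mu(E)$; hence $\sum_{n\geq 0} g\circ R^n=\infty$ $\rho$-a.e.\ if and only if $\mu(E)=0$, that is, if and only if $\sum_{n\geq 0} f\circ T^n=\infty$ $\mu$-a.e. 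Applying the characterisation above with the admissible test function $g$ on $\bfZ$ and $f$ on $\bfX$, the left-hand condition is precisely ``$\bfZ$ conservative'' and the right-hand one ``$\bfX$ conservative'', which yields the equivalence.

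I do not expect a genuine obstacle here: the only non-elementary input is the Hopf/Maharam recurrence characterisation of conservativity, which is standard in infinite ergodic theory. The point that needs a little care is the measure-theoretic bookkeeping — that $g$ really is $\rho$-integrable and $\rho$-a.e.\ positive, which is exactly where the hypotheses $\pi_*\rho=\mu$ and $\s$-finiteness of $\mu$ are used, and that passing from $\pi\circ R=T\circ\pi$ to $\pi\circ R^n=T^n\circ\pi$ only discards countably many null sets. If one prefers to avoid quoting the Hopf decomposition, the implication ``$\bfZ$ conservative $\Rightarrow$ $\bfX$ conservative'' can be done by hand: if $A\subset X$ has $\mu(A)>0$ and the sets $T^nA$, $n\in\bbZ$, are pairwise disjoint, then $\pi^{-1}(A)\subset Z$ satisfies $\rho(\pi^{-1}(A))=\mu(A)>0$ and $R^n\pi^{-1}(A)=\pi^{-1}(T^nA)$ mod $\rho$-null, so it is wandering for $R$. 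The converse direction, however, really does seem to require the recurrence characterisation, since the $\pi$-image of a wandering set in $Z$ need not be wandering in $X$.
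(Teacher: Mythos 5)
Your proof is correct. Note that the paper states Lemma \ref{lem:conservative_factor} without any proof, treating it as a standard fact from infinite ergodic theory, so there is no argument in the text to compare yours against. Your route — fix $f\in L^1(\mu)$ with $f>0$ $\mu$-a.e., pull it back to $g:=f\circ\pi$, and apply Hopf's recurrence criterion (conservativity $\Leftrightarrow$ $\sum_{n\geq 0}h\circ S^n=\infty$ a.e.\ for one, equivalently every, strictly positive $h\in L^1$) on both systems — is a complete and standard way to get both implications simultaneously, and the measure-theoretic bookkeeping you flag is handled correctly: $g\in L^1(\rho)$ and $g>0$ $\rho$-a.e.\ precisely because $\pi_*\rho=\mu$, and the intertwining $\pi\circ R^n=T^n\circ\pi$ holds $\rho$-a.e.\ simultaneously for all $n$ after discarding a countable union of null sets, which justifies the identity $\sum_{n\geq0}g\circ R^n=(\sum_{n\geq0}f\circ T^n)\circ\pi$ and the transfer of the divergence set through $\pi$. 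Your side remark is also accurate: the implication ``$\bfZ$ conservative $\Rightarrow$ $\bfX$ conservative'' admits the elementary argument of pulling back a wandering set (a wandering $A\subset X$ of positive measure yields the wandering set $\pi^{-1}(A)$ mod $\rho$-null), whereas the converse is where the factor-map hypothesis $\pi_*\rho=\mu$ is genuinely needed — without it (e.g.\ $Z=X\times\bbZ$ with counting measure on the fibers) the statement fails — and the Hopf criterion is the natural tool there.
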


\subsection{Joinings and confined extensions}

Let $\bfX := (X, \mu, T)$ and $\bfY := (Y, \nu, S)$ be two $\s$-finite measure preserving dynamical systems. A \emph{joining} of $\bfX$ and $\bfY$ is a $(T \times S)$-invariant measure $\l$ on $X \times Y$ whose marginals are $\mu$ and $\nu$ (therefore the marginals have to be $\s$-finite). It yields the dynamical system
$$\bfX \times_\l \bfY := (X \times Y, \l, T \times S).$$
On this system, the coordinate projections are factor maps that project onto $\bfX$ and $\bfY$ respectively. If it is not necessary to specify the measure, we will simply write $\bfX \times \bfY$. For the product joining, we will use the notation $\bfX \otimes \bfY := \bfX \times_{\mu \otimes \nu} \bfY$. For the $n$-fold product self-joining, we will write $\bfX^{\otimes n}$. 

When $\bfX$ and $\bfY$ are probability measure preserving systems, there is at least one joining, the product joining $\bfX \otimes \bfY$. However, if we have infinite measures, the product measure is not a joining because its marginals are not $\s$-finite. In fact, there exist pairs of systems for which there does not exist any joining. For example, Lemma \ref{lem:conservative_factor} implies that there cannot exist a joining of a conservative and a non-conservative system. 

We now give the definition of \emph{confined extensions}, which concerns only probability measure preserving dynamical systems.
\begin{defi} \label{def:confined}
Let $\bfX := (X, \mu, T)$ and $\bfY := (Y, \nu, S)$ be probability measure preserving dynamical systems, and $\pi: \bfX \to \bfY$ be a factor map. The extension $\bfX \overset{\pi}{\to} \bfY$ is said to be \emph{confined} if it satisfies one of the following equivalent properties:
\begin{enumerate}[label = (\roman*)]
\item every $2$-fold self-joining of $\bfX$ in which the two copies of $\pi$ are independent random variables is the product joining: i.e. the only measure $\l \in \P(X \times X)$ that is $T \times T$-invariant, with $\l(\cdot \times X) = \l(X \times \cdot)= \mu$ and $(\pi \times \pi)_*\l = \nu \otimes \nu$, is $\l = \mu \otimes \mu$;
\item for every system $\bfZ$, every joining of $\bfX$ and $\bfZ$ in which the copy of $\pi$ and the projection on $\bfZ$ are independent random variables is the product joining;
\item for every $n \in \bbN^* \cup \{+ \infty\}$, every $n$-fold self-joining of $\bfX$ in which the $n$ copies of $\pi$ are mutually independent random variables is the $n$-fold product joining.
\end{enumerate}
\end{defi}
It was shown in \cite[Proposition 3.3]{benzoniConfined} that the definitions (i), (ii), and (iii) are equivalent. In this paper, we mainly use the definition (i). As we mentioned, this concerns only the case for probability measures. An adaptation to the infinite measure case would be more intricate, mainly because if we assume that a measure $\l$ on $X \times X$ projects onto $\nu \otimes \nu$ on $Y \times Y$ and that $\nu$ is an infinite measure, then $\l$ cannot be a joining of $\mu$. That is because, in that case, both projections of $\l$ on $X$ would not be $\s$-finite. 

\subsection{Compact extensions and relative unique ergodicity}

\begin{defi}
Let $(Z, \rho, R) \overset{\pi}{\to} (X, \mu, T)$ be an extension. It is \emph{relatively uniquely ergodic} if the only $R$-invariant measure $\l$ on $Z$ such that $\pi_*\l = \mu$ is $\l = \rho$.
\end{defi}

Let $\bfX := (X, \mu, T)$ be a measure preserving dynamical system, $G$ a compact group and $\phi: X \to G$ a cocycle. Let $m_G$ denote the Haar probability measure on $G$. The compact extension of $\bfX$ given by $\phi$ is the system $\bfZ$ on $(X \times G, \mu \otimes m_G)$ given by the skew product 
$$\begin{array}{cccc}
T_\phi: & X \times G & \to & X \times G\\
& (x, g) & \longmapsto & (Tx, g \cdot \phi(x))
\end{array}.$$
This is the most well known family of extensions. The only result we will need, is the following, due to Furstenberg:
\begin{lem}[Furstenberg \cite{Furstenberg_book}] \label{lem:furstenberg}
Let $\bfX := (X, \mu, T)$ be an ergodic measure preserving dynamical system where $\mu$ is a finite or $\s$-finite measure. Assume that the compact extension $\bfZ = (X \times G, \mu \otimes m_G, T_\phi)$ is ergodic. Let $\l$ be a $\s$-finite $T_\phi$-invariant measure on $X \times G$ such that $\l(\cdot \times G) = \mu$. Then
$$\l = \mu \otimes m_G.$$ 
\end{lem}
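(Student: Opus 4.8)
The plan is to disintegrate $\lambda$ along the projection onto $X$ and reduce the statement to a fiberwise identity, which is then killed by Fourier analysis on the compact group $G$.

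First, since $\lambda(X_n\times G)=\mu(X_n)<\infty$ for an exhaustion $X=\bigcup_n X_n$ with $\mu(X_n)<\infty$, the measure $\lambda$ is $\s$-finite and admits a disintegration $\lambda=\int_X \delta_x\otimes\lambda_x\,\d\mu(x)$ over the first coordinate, where $x\mapsto\lambda_x$ is a measurable family of \emph{probability} measures on $G$ (the total fiber mass equals $\d\lambda(\,\cdot\,\times G)/\d\mu=1$). Spelling out $(T_\phi)_*\lambda=\lambda$ and invoking uniqueness of the disintegration, $T_\phi$-invariance is equivalent to the cocycle identity $\lambda_{Tx}=(R_{\phi(x)})_*\lambda_x$ for $\mu$-a.e. $x$, where $R_g$ denotes right translation by $g$ on $G$. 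Thus it suffices to show that $\lambda_x=m_G$ for $\mu$-a.e. $x$, and I will do this by proving that every nontrivial Fourier coefficient of $\lambda_x$ vanishes almost everywhere.

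Fix a nontrivial irreducible unitary representation $\pi$ of $G$, say of dimension $d_\pi$, and put $\widehat{\lambda_x}(\pi):=\int_G\pi(g)\,\d\lambda_x(g)\in M_{d_\pi}(\bbC)$. Since right translation acts on the Fourier side by $\widehat{(R_h)_*\eta}(\pi)=\widehat{\eta}(\pi)\,\pi(h)$, the cocycle identity becomes $\widehat{\lambda_{Tx}}(\pi)=\widehat{\lambda_x}(\pi)\,\pi(\phi(x))$. Now consider the function $F(x,g):=\mathrm{tr}\bigl(\pi(g)\,\widehat{\lambda_x}(\pi)^{*}\bigr)$ on $X\times G$. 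It is bounded, and using $\pi(\phi(x))\,\pi(\phi(x))^{*}=\Id$ together with the transformed identity one checks that $F\circ T_\phi=F$; that is, $F$ is $T_\phi$-invariant. By ergodicity of $\bfZ$, the bounded invariant function $F$ is $\mu\otimes m_G$-a.e. equal to a constant, and integrating $F(x,\cdot)$ against $m_G$ while using $\int_G\pi(g)\,\d m_G(g)=0$ (valid because $\pi$ is nontrivial) identifies that constant as $0$. Hence for $\mu$-a.e. $x$ the matrix coefficient $g\mapsto\mathrm{tr}(\pi(g)\,\widehat{\lambda_x}(\pi)^{*})$ vanishes $m_G$-a.e., hence identically by continuity, and the Schur orthogonality relations then force $\widehat{\lambda_x}(\pi)=0$.

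Because $G$ is second countable, it has only countably many isomorphism classes of irreducible representations, so there is a single $\mu$-conull set on which all nontrivial Fourier coefficients of $\lambda_x$ vanish; by the Peter--Weyl theorem, $\lambda_x=m_G$ on that set, and therefore $\lambda=\mu\otimes m_G$. The step that needs genuine care is turning ergodicity of $\bfZ$ into the vanishing of Fourier coefficients: since $\mu$ is possibly infinite one cannot lean on $L^2$-spectral arguments, so one must argue directly with a.e.-constancy of bounded invariant functions, and one must build the invariant function out of $\widehat{\lambda_x}(\pi)^{*}$ rather than $\widehat{\lambda_x}(\pi)$ precisely so that the cocycle twist $\pi(\phi(x))$ cancels. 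The remaining ingredients — the disintegration, the transformation rule for Fourier coefficients under translation, and Schur orthogonality — are routine.
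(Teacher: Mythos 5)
Your argument is correct, and it takes a genuinely different route from the paper's. The paper follows Furstenberg's classical joining argument: after passing to an ergodic component of $\l$, one forms the relatively independent joining of $(X \times G, \mu \otimes m_G, T_\phi)$ and $(X \times G, \l, T_\phi)$ over $(X, \mu, T)$, viewed as $(x,g,h) \mapsto (Tx, g\cdot\phi(x), h\cdot\phi(x))$ on $X \times G \times G$; since both fiber coordinates are twisted by the same $\phi(x)$, the map $(x,g,h) \mapsto g^{-1}h$ is invariant, hence a.e.\ equal to some $k \in G$ by ergodicity, which exhibits $\l$ as $(\Id \times R_k)_*(\mu \otimes m_G) = \mu \otimes m_G$. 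You instead disintegrate $\l$ over the base, turn invariance into the fiberwise identity $\l_{Tx} = (R_{\phi(x)})_*\l_x$, and kill each nontrivial Fourier coefficient $\widehat{\l_x}(\pi)$ by building the bounded invariant function $\mathrm{tr}(\pi(g)\widehat{\l_x}(\pi)^*)$ -- your conjugation by $\pi(\phi(x))^*$ is exactly the same cancellation that makes $g^{-1}h$ invariant in the joining proof, so the two arguments are morally parallel but technically distinct. What your route buys: it avoids ergodic decomposition (of $\l$ and of the joining), which requires a word of care in the $\s$-finite setting, it never uses ergodicity of $\bfX$ separately (only of $\bfZ$, which in any case implies it), and it handles non-abelian $G$ explicitly through irreducible representations and Peter--Weyl. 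What the paper's route buys: it is shorter, needs no Fourier analysis or countability of $\widehat{G}$, and the disintegration machinery is replaced by one soft joining construction. Your supporting steps (disintegration over the $\s$-finite base with probability fibers since $\l(\cdot \times G) = \mu$, the translation rule on the Fourier side, constancy of bounded a.e.-invariant functions for infinite-measure ergodic systems, identification of the constant as $0$ via $\int_G \pi \, \d m_G = 0$, full support of $m_G$ plus Schur orthogonality, and countability of the dual for a compact metrizable $G$) are all sound.
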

This lemma is usually stated with $\mu$ a probability measure, but the infinite measure case is proven in the exact same way. 

\begin{comment}
\begin{proof}
Let $\l$ be a measure as in the Lemma. Since $\mu$ is ergodic, up to considering an ergodic component of $\l$, we can assume that $\l$ is ergodic. Next, we form the relatively independent joining of $(X \times G, \mu \otimes m_G, T_\phi)$ and $(X \times G, \l, T_\phi)$ over $(X, \mu, T)$, and take an ergodic component $\rho$ of that system. We can view that as the system 
$$(X \times G \times G, \rho, R),$$
where we have $\rho(\cdot \times \cdot \times G) = \mu \otimes m_G$, $\rho(\cdot \times G \times \cdot) = \l$ and $R(x, g, h) = (Tx, g \cdot \phi(x), h \cdot \phi(x))$. One can then define the map 
$$\begin{array}{cccc}
F: & X \times G \times G & \to & G \\
&(x, g, h) & \mapsto & g^{-1} \cdot h
\end{array},$$
and check that it is $R$-invariant. It is therefore $\rho$-almost surely constant: there is $k \in G$ such that $F \equiv k$ mod $\rho$. Using the notation $R_g : g' \mapsto g' \cdot g$, it follows that
$$\l = (\Id \times R_k)_* (\mu \otimes m_G) = \mu \otimes m_G.$$
\end{proof}
\end{comment}

Furstenberg's lemma can be summarized by saying that an ergodic compact extension is relatively uniquely ergodic.

\subsection{Poisson suspensions, splittings and extensions }
\label{sect:intro_poisson}

Let $\bfX := (X, \mu, T)$ be a $\s$-finite measure preserving dynamical system. For convenience, we will assume that $X = \bbR^+$ and that $\mu$ is a locally finite measure, i.e. for any bounded set $B \subset \bbR^+$, we have $\mu(B) < \infty$. We define the set of counting measures on $X$ by
$$X^* := \left\{\text{locally finite measures of the form } \sum_{i \geq 1} \delta_{x_i}\right\}.$$
For technical properties of $X^*$, we follow the setup of \cite[Section 1.1]{sushi}. In particular, the measurable sets on $X^*$ are the restrictions of the Borel sets of $\t X$, the space of locally finite measures on $X$ (see \cite[§ A2.6]{Daley_Vere-Jones}).

A point process is a probability measure on $X^*$. The Poisson point process of intensity $\mu$, which we denote $\mu^*$, is the point process characterized by the fact that, for $A_1, ..., A_n \subset X$ measurable disjoint subsets such that $0 < \mu(A_i) < \infty$, the random variables $\o(A_1), ..., \o(A_n)$, for $\o \in X^*$, are independent Poisson random variables of respective parameter $\mu(A_i)$, for $i \in \llbracket 1, n \rrbracket$. 

On the probability space $(X^*, \mu^*)$, we define the transformation 
$$T_*: \sum_{i \geq 1} \delta_{x_i} \mapsto \sum_{i \geq 1} \delta_{Tx_i}.$$
This map might not be defined for every $\o \in X^*$ (since $T_*(\o)$ is not necessarily locally finite), but it is a well-defined bijective transformation on $\bigcap_{n \in \bbZ} T_*^{-n}X^*$, which is a set of full measure for $\mu^*$. Once again, this follows the setup of \cite[Section 1.1]{sushi}. The resulting dynamical system $\bfX^* := (X^*, \mu^*, T_*)$ is called the \emph{Poisson suspension over} $(X, \mu, T)$. 

 It is well known that the Poisson suspension $\bfX^*$ is ergodic if and only if there is no $T$-invariant measurable subset $A \subset X$ such that $0 < \mu(A) < \infty$ (see \cite{marchat}). Moreover, this implies that if $\bfX^*$ is ergodic, it is automatically weakly mixing. Also, note that it is not necessary that $\bfX$ is ergodic for $\bfX^*$ to be ergodic.

We use the notion of Poisson splittings from \cite{ErgPoissonSplittings}, but with different choices in the notation. A \emph{splitting of order $n$} of the Poisson suspension $(X^*, \mu^*, T_*)$ is a family $\{\nu_i\}_{1 \leq i \leq n}$ of $T_*$-invariant probability measures on $X^*$ and $\l$ a $T_*^{\times n}$-invariant joining of $\{\nu_i\}_{1 \leq i \leq n}$ such that $\Sigma^{(n)}_*\l = \mu^*$, where 
$$\begin{array}{cccc}
\Sigma^{(n)}: & X^* \times \cdots \times X^* & \to & X^*\\
& (\o_1, ..., \o_n) & \longmapsto & \o_1 + \cdots + \o_n
\end{array}.$$
The splitting is said to be \emph{ergodic} if $\l$ is an ergodic joining. The splitting is a \emph{Poisson splitting} if there exist $\{\mu_i\}_{1 \leq i \leq n}$, $\s$-finite measures on $X$ such that, for $i \in \llbracket 1, n \rrbracket$, $\nu_i = \mu_i^*$, and $\l$ is the product measure $\mu_1^* \otimes \cdots \otimes \mu_n^*$. With that notation, the result \cite[Theorem 2.6]{ErgPoissonSplittings} becomes 
\begin{thm} \label{thm:poisson_splitting}
Let $\bfX := (X, \mu, T)$ be a $\s$-finite measure preserving dynamical system of infinite ergodic index. Then any ergodic splitting of the Poisson suspension $(X^*, \mu^*, T_*)$ is a Poisson splitting.
\end{thm}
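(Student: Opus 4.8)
The plan is to recast a splitting as an equivariant random colouring of a Poisson configuration, and then to exploit that $\bfX$ has infinite ergodic index to force that colouring to be, for almost every configuration, one fixed i.i.d.\ colouring of its points --- which, by the coloring theorem for Poisson processes, is exactly the statement that the splitting is Poissonian.

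First I would set up the colouring. Disintegrating $\l$ over $\mu^*$ along $\Sigma^{(n)}$ gives $\l=\int_{X^*}\l_\o\,d\mu^*(\o)$ with $\l_\o$ carried by the $n$-tuples summing to $\o$; identifying such a tuple with a colouring $c\colon\o\to\llbracket 1,n\rrbracket$ of the points of $\o$ (with multiplicities, if $\mu$ has atoms) whose $i$-th coordinate is the sub-configuration coloured $i$, we obtain a kernel $\o\mapsto\kappa_\o$, a probability measure on colourings of $\o$, and the $T_*^{\times n}$-invariance of $\l$ becomes the equivariance $\kappa_{T_*\o}=T_*\kappa_\o$ for $\mu^*$-a.e.\ $\o$. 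For $k\geq1$ and colours $i_1,\dots,i_k$, let $p^{(k)}_{i_1,\dots,i_k}(x_1,\dots,x_k,\o)$ be the probability, under $\kappa_{\o+\delta_{x_1}+\dots+\delta_{x_k}}$, that $x_j$ is coloured $i_j$ for each $j$. By Slivnyak's theorem this defines a measurable function on the $k$-point Palm space $(X^k\times X^*,\mu^{\otimes k}\otimes\mu^*)$, which carries the transformation $T^{\times k}\times T_*$, and the equivariance above makes $p^{(k)}_{i_1,\dots,i_k}$ invariant.

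Now comes the ergodic input. Because $\bfX$ has infinite ergodic index, $\bfX^{\otimes m}$ is conservative and ergodic for every $m$; in particular $\bfX^{\otimes k}$ and $\bfX^{\otimes 2k}$ are conservative ergodic, while $\bfX^*$ is ergodic (there is no $T$-invariant set of finite positive measure) hence weakly mixing. A standard argument --- spectral analysis over the second coordinate, using the ergodicity of $\bfX^{\otimes 2k}$ and the continuous spectrum of $\bfX^*$ --- then shows that the Palm system $\bfX^{\otimes k}\times\bfX^*$ is ergodic, so each $p^{(k)}_{i_1,\dots,i_k}$ is almost everywhere equal to a constant $c^{(k)}_{i_1,\dots,i_k}$. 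By Mecke's formula this says that for $\mu^*$-a.e.\ $\o$ the law under $\kappa_\o$ of the colours of any finite set of points of $\o$ depends only on the multiset of colours --- so that colour sequence is exchangeable, with the same finite-dimensional laws for every $\o$. By de Finetti's theorem $\kappa_\o=\int_\Delta\bigl(\bigotimes_{x\in\o}\theta\bigr)\,dM_\o(\theta)$ for a configuration-dependent probability measure $M_\o$ on the simplex $\Delta\subset\P(\llbracket 1,n\rrbracket)$; as the moments $\int\theta(i_1)\cdots\theta(i_k)\,dM_\o(\theta)=c^{(k)}_{i_1,\dots,i_k}$ do not depend on $\o$ while $\Delta$ is compact, $M_\o=M$ for a.e.\ $\o$. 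Integrating over $\o$ gives $\l=\int_\Delta\l_\theta\,dM(\theta)$, where $\l_\theta$ is the Poisson splitting with colour distribution $\theta$; by the coloring theorem $\l_\theta$ identifies with the Poisson suspension over $(X\times\llbracket 1,n\rrbracket,\mu\otimes\theta,T\times\id)$, which is ergodic since its base has no invariant set of finite positive measure, and $\theta\mapsto\l_\theta$ is a Borel injection because the $i$-th marginal of $\l_\theta$ has intensity $\theta(i)\mu$. As $\l$ is ergodic, uniqueness of the ergodic decomposition forces $M=\delta_{\theta_0}$, and hence $\l=\l_{\theta_0}$ is the Poisson splitting with $\mu_i:=\theta_0(i)\,\mu$.

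The real work, I expect, lies in the measure-theoretic bookkeeping rather than in the ideas: making $p^{(k)}_{i_1,\dots,i_k}$ genuinely well defined on the Palm spaces through the iterated (and, for atomic $\mu$, non-simple) Mecke--Slivnyak formulas, handling atoms of $\mu$ in the very notion of ``colouring the points of $\o$'', and establishing the ergodicity of the Palm system $\bfX^{\otimes k}\times\bfX^*$ --- which is exactly where the \emph{full} strength of the infinite ergodic index is consumed, since controlling the $k$-point correlation requires $\bfX^{\otimes 2k}$ to be ergodic.
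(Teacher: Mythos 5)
You should first note that this paper does not actually prove Theorem \ref{thm:poisson_splitting}: it is imported as \cite[Theorem 2.6]{ErgPoissonSplittings}, so there is no internal proof to compare yours against; I can only assess your sketch on its own terms, and as a scheme it is sound and sits close to the toolkit the paper does develop. In particular, the decisive dynamical input you need --- ergodicity of the Palm system $\bfX^{\otimes k}\otimes\bfX^{*}$ --- is exactly Corollary \ref{cor:keane_poisson}, which under the infinite-ergodic-index hypothesis is available for every $k$; note that its proof needs only ergodicity of $\bfX^{\otimes k}$ together with Meyerovitch's theorem (Theorem \ref{thm:meyerovitch_poisson}) and Keane's criterion, rather than ergodicity of $\bfX^{\otimes 2k}$ as your spectral argument suggests, though under the standing hypothesis this distinction is immaterial.

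Two points in the write-up need repair, both of which you partly anticipate. First, $p^{(k)}_{i_1,\dots,i_k}$ cannot literally be defined by evaluating the kernel $\kappa$ at the configuration $\omega+\delta_{x_1}+\cdots+\delta_{x_k}$: such configurations form a $\mu^{*}$-null set, while $\kappa$ is only defined $\mu^{*}$-a.e. The correct formulation is to encode the splitting as a marked point process on $X\times\llbracket 1,n\rrbracket$ over the ground process $\mu^{*}$, and to define $p^{(k)}_{i_1,\dots,i_k}$ as the Radon--Nikodym derivative of the $k$-th ``colour'' Campbell measure with respect to the $k$-th Campbell measure of the ground process, the latter being identified with $\mu^{\otimes k}\otimes\mu^{*}$ by the Mecke--Slivnyak formula; with this definition the $T^{\times k}\times T_{*}$-invariance, the transfer of a.e.\ constancy into a statement valid for \emph{every} $k$-tuple of distinct points of $\mu^{*}$-a.e.\ $\omega$ (a countable family, handled by one more application of Mecke), and then exchangeability, de Finetti, the marking theorem, ergodicity of each $\lambda_\theta$, and extremality of $\lambda$ all go through as you describe. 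Second, your hedge about atoms is vacuous: since the ergodic index is defined here for $\mu(X)=\infty$, an atom of $\mu$ would, by ergodicity, reduce $(X,\mu,T)$ mod $0$ to an infinite orbit carrying a multiple of counting measure, i.e.\ the shift on $\bbZ$, whose $2$-fold product is not ergodic; so infinite ergodic index (indeed index $\geq 2$) already forces $\mu$ non-atomic and $\mu^{*}$-a.e.\ configuration simple. With these adjustments your sketch is a legitimate proof scheme for the quoted theorem.
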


Consider two $\s$-finite systems $\bfZ := (Z, \rho, R)$ and $\bfX := (X, \mu, T)$ and a factor map $\pi: \bfZ \to \bfX$, which means that we have an extension $\bfZ \overset{\pi}{\to} \bfX$ of $\s$-finite systems. We can then define the map 
$$\pi_*: \sum_{i \geq 1} \delta_{x_i} \mapsto \sum_{i \geq 1} \delta_{\pi (x_i)}.$$
One can check that this yields a factor map from $\bfZ^*$ to $\bfX^*$, therefore we have defined an extension $\bfZ^* \overset{\pi_*}{\to} \bfX^*$ between Poisson suspensions. Such an extension is what we call a \emph{Poisson suspension}.

\section{A Poisson extension over a trivial cocycle}
\label{sect:trivial_cocycle}

In this section, we study Poisson extensions over extensions of the form 
$$\begin{array}{cccc}
T \times \Id: & X \times K & \to & X \times K\\
& (x, \k) & \longmapsto & (Tx, \k)
\end{array},$$
on $(X \times K, \mu \otimes \rho)$, where $K$ is a standard Borel space and $\rho$ is a probability measure on $K$. We start in Section \ref{sect:rue_and_confined} by showing that if $T$ has infinite ergodic index, the associated Poisson extension is confined. Then in Section \ref{sect:perm_group}, we see that marked point processes enable us to write Poisson extensions through a Rokhlin cocycle, and we give an application in probability theory by giving an alternative proof of the De Finetti theorem (see Corollary \ref{cor:definetti}). Finally, in Section \ref{sect:non-confined}, we give an example of a non-confined Poisson extension.

\subsection{Confinement as a consequence of Poisson splittings}
\label{sect:rue_and_confined}

We derive the content of this section as a consequence of Theorem \ref{thm:poisson_splitting}. In \cite{ErgPoissonSplittings}, the authors proved Theorem \ref{thm:poisson_splitting} and gave an application of that splitting result (specifically, \cite[Theorem 3.1]{ErgPoissonSplittings}). Here, we note that it can be rephrased as a relative unique ergodicity result for the Poisson extension. In our notation, it becomes:
\begin{thm} \label{cor:poisson_splitting}
Let $\bfX := (X, \mu, T)$ be a $\s$-finite measure preserving dynamical system of infinite ergodic index, and $K$ a standard Borel space. Let $\l$ be an invariant marked point process over $\mu^*$, i.e. a $(T \times \Id)_*$-invariant probability measure on $(X \times K)^*$ such that $(\pi_*)_*\l = \mu^*$. If $(\l, (T \times \Id)_*)$ is ergodic, then there exists a probability measure $\rho$ on $K$ such that $\l = (\mu \otimes \rho)^*$. 
\end{thm}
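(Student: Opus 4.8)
The plan is to feed the ``colorings'' of the marked point process $\l$ into the splitting theorem (Theorem~\ref{thm:poisson_splitting}), and then to reassemble the resulting pieces into a single Poisson process on $X\times K$ whose intensity the ergodicity of $T$ pins down to a product.

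\emph{Step 1 (colorings and the splitting theorem).} Fix a finite measurable partition $K = K_1 \sqcup \cdots \sqcup K_n$ and consider the map $c\colon (X\times K)^* \to (X^*)^n$ sending $\o = \sum_i \delta_{(x_i,\k_i)}$ to $(\o_{K_1},\dots,\o_{K_n})$, where $\o_{K_j} := \o(\cdot\times K_j)$ records the locations carrying a mark in $K_j$. Since $\o_{K_j}\le \pi_*\o$ and $\pi_*\o$ has law $\mu^*$ (hence is $\l$-a.s.\ locally finite), $c$ is $\l$-a.s.\ well defined; because $T\times\Id$ leaves marks unchanged, $c\circ (T\times\Id)_* = T_*^{\times n}\circ c$ and $\Sigma^{(n)}\circ c = \pi_*$. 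Thus $\nu_j := (\o\mapsto\o_{K_j})_*\l$ is a $T_*$-invariant probability on $X^*$, $\l_{(n)} := c_*\l$ is a $T_*^{\times n}$-invariant joining of the $\nu_j$ with $\Sigma^{(n)}_*\l_{(n)} = (\pi_*)_*\l = \mu^*$, and $\l_{(n)}$ is ergodic, being the image of the ergodic system $((X\times K)^*,\l,(T\times\Id)_*)$ under the factor map $c$. Hence $\{\nu_j\}_{j\le n}$ with $\l_{(n)}$ is an ergodic splitting of order $n$ of $(X^*,\mu^*,T_*)$; by Theorem~\ref{thm:poisson_splitting} it is a Poisson splitting, so there are $\s$-finite measures $\mu_{K_1},\dots,\mu_{K_n}$ on $X$ with $\nu_j = \mu_{K_j}^*$, $\l_{(n)} = \mu_{K_1}^*\otimes\cdots\otimes\mu_{K_n}^*$, and, comparing mean measures, $\mu_{K_1}+\cdots+\mu_{K_n} = \mu$.

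\emph{Step 2 (assembling a Poisson process on $X\times K$).} Applying Step~1 to the partitions $\{A, K\setminus A\}$ shows that for every Borel $A\subseteq K$ the point process $\nu_A := (\o\mapsto\o_A)_*\l$ is Poisson; let $\mu_A$ be its intensity, so $0\le\mu_A\le\mu$, $\mu_\emptyset = 0$, $\mu_K = \mu$. Using the partitions $\{A,B,K\setminus(A\cup B)\}$ and that a superposition of independent Poisson processes has intensity the sum of intensities, one gets $\mu_{A\cup B} = \mu_A + \mu_B$ for disjoint $A,B$; countable additivity of $A\mapsto\mu_A$ then follows by dominated convergence, since for bounded Borel $C\subseteq X$ we have $\mu_A(C) = \bbE_\l[\o(C\times A)]$ with $\o(C\times\cdot)$ $\l$-a.s.\ a finite measure on $K$ and $\bbE_\l[\o(C\times K)] = \mu(C)<\infty$. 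Thus $A\mapsto\mu_A$ is a countably additive, $\mu$-dominated measure-valued set function on $K$, so it disintegrates: there is a measurable family of probability measures $(\rho_x)_{x\in X}$ on $K$ with $\mu_A = \int_\cdot \rho_x(A)\,\mu(\d x)$; equivalently $\L := \int_X \rho_x\,\mu(\d x)$ is a $\s$-finite measure on $X\times K$ with $\L(\cdot\times K)=\mu$ and $\L(\cdot\times A) = \mu_A$. I then claim $\l = \L^*$: it is enough to match the joint laws of counts on the $\cap$-stable generating class of rectangles $C\times A$ ($C\subseteq X$ bounded Borel, $A\subseteq K$ Borel); and for a finite partition $\{A_1,\dots,A_n\}$ of $K$ and disjoint bounded Borel $C_1,\dots,C_m$, under $\l$ the counts $\o(C_\ell\times A_j) = \o_{A_j}(C_\ell)$ are independent (by Step~1) and $\mathrm{Poisson}(\mu_{A_j}(C_\ell))$, while under $\L^*$ they are likewise independent and $\mathrm{Poisson}(\L(C_\ell\times A_j)) = \mathrm{Poisson}(\mu_{A_j}(C_\ell))$.

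\emph{Step 3 (the intensity is a product).} Pushing forward by $(T\times\Id)_*$ commutes with the Poisson functor, so $(T\times\Id)_*\l = \l$ yields $(T\times\Id)_*\L = \L$. Writing $\L = \int_X \rho_x\,\mu(\d x)$ and using $T_*\mu = \mu$ gives $(T\times\Id)_*\L = \int_X \rho_{T^{-1}x}\,\mu(\d x)$; uniqueness of the disintegration of $\L$ over its $X$-marginal $\mu$ forces $\rho_{T^{-1}x} = \rho_x$ for $\mu$-a.e.\ $x$, i.e.\ $x\mapsto\rho_x$ is $T$-invariant. Since $(X,\mu,T)$ has infinite ergodic index it is in particular ergodic, so $x\mapsto\rho_x$ equals $\mu$-a.e.\ a constant probability measure $\rho$ on $K$; hence $\L = \mu\otimes\rho$ and $\l = (\mu\otimes\rho)^*$, as claimed. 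The step I expect to be the main obstacle is Step~2: upgrading the ``finite-coloring'' Poisson structure (one statement per finite partition of $K$) to a single Poisson process on $X\times K$, which rests on the countable additivity of $A\mapsto\mu_A$ (yielding the disintegrating kernel) together with a careful use of the fact that a point process is determined by the joint laws of its counts on a $\cap$-stable generating class; the rest is bookkeeping with the joining and point-process formalism.
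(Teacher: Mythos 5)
Your proof is correct and takes essentially the same route as the paper: the paper states this theorem as a rephrasing of \cite[Theorem 3.1]{ErgPoissonSplittings}, i.e.\ as an application of the splitting result (Theorem \ref{thm:poisson_splitting}), which is exactly the reduction you perform — color the points by mark sets of a finite partition, apply the splitting theorem to get independent Poisson colors, then reconstruct the intensity on $X \times K$ and use invariance plus ergodicity of $(X,\mu,T)$ to identify it as $\mu \otimes \rho$. Your Steps 2--3 correctly supply the bookkeeping (countable additivity of $A \mapsto \mu_A$, determination of a point process by counts on a $\cap$-stable generating class, uniqueness of the disintegration) that the paper leaves to the citation.
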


We deduce that the Poisson extension is confined:
\begin{thm} \label{thm:T_Id_confined}
Let $\bfX := (X, \mu, T)$ be a $\s$-finite measure preserving dynamical system of infinite ergodic index, and $(K, \rho)$ a standard probability space. Then the Poisson extension
$$((X \times K)^*, (\mu \otimes \rho)^*, (T \times \Id)_*) \to (X^*, \mu^*, T_*),$$
is confined. 
\end{thm}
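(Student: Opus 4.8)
The plan is to use characterization (i) of confinement: I must show that any $T_*^{\times 2}$-invariant probability measure $\l$ on $(X\times K)^* \times (X\times K)^*$ whose marginals are $(\mu\otimes\rho)^*$ and which projects, under $(\pi_*)\times(\pi_*)$, to $\mu^*\otimes\mu^*$ on $X^*\times X^*$, is the product $(\mu\otimes\rho)^*\otimes(\mu\otimes\rho)^*$. The key observation is that such a $\l$ can be reinterpreted as a splitting-type object over a product system, so that Theorem~\ref{thm:poisson_splitting} (or its rephrasing Theorem~\ref{cor:poisson_splitting}) applies.

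First I would reduce to the ergodic case: decompose $\l$ into ergodic components for $T_*^{\times 2}$. Each ergodic component $\l'$ still has the property that $(\pi_*\times\pi_*)_*\l' = \mu^*\otimes\mu^*$, because $\mu^*\otimes\mu^* = (\mu\sqcup\mu)^*$ is the law of a Poisson process of infinite ergodic index (the disjoint union $X\sqcup X$ with $T\sqcup T$ still has infinite ergodic index since $\bfX$ does, using the fact about smaller exponents and that conservativity/ergodicity of $(X\sqcup X)^{\otimes k}$ reduces to that of $X^{\otimes j}$ for $j\le k$) — hence $\mu^*\otimes\mu^*$ is ergodic, so almost every ergodic component of $\l$ projects to it. The marginals of $\l'$ need not be $(\mu\otimes\rho)^*$ anymore, but they are $T_*$-invariant probability measures on $(X\times K)^*$ projecting onto $\mu^*$ under $(\pi_*)_*$; that is, each marginal is an ergodic invariant marked point process over $\mu^*$, so by Theorem~\ref{cor:poisson_splitting} each marginal has the form $(\mu\otimes\rho_1)^*$, $(\mu\otimes\rho_2)^*$ for some probability measures $\rho_1,\rho_2$ on $K$. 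Now view the pair $(\o_1,\o_2)$ under $\l'$ together: the sum $\Sigma^{(2)}(\o_1,\o_2)=\o_1+\o_2$ lives over the base system $X\sqcup X$ — more precisely, after encoding which copy each point came from, $\l'$ is an ergodic splitting of order $2$ of the Poisson suspension over $(X\times\{1,2\}\times K,\ \ldots)$ sitting above $(X\sqcup X)^*$. Here I need to massage the marks: the system $(X\times K,\mu\otimes\rho_i,T\times\Id)$ is the base, and the two marginals $(\mu\otimes\rho_1)^*$, $(\mu\otimes\rho_2)^*$ together with the joining $\l'$ form a splitting of the Poisson suspension over the disjoint union $(X\times K,\mu\otimes\rho_1,T\times\Id)\sqcup(X\times K,\mu\otimes\rho_2,T\times\Id)$, whose base has infinite ergodic index. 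Applying Theorem~\ref{thm:poisson_splitting} to that system forces $\l'$ to be the Poisson/product splitting, i.e. $\l' = (\mu\otimes\rho_1)^*\otimes(\mu\otimes\rho_2)^*$.

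It remains to integrate back over the ergodic decomposition and identify $\rho_1,\rho_2$ with $\rho$. Since $\l' = (\mu\otimes\rho_1)^*\otimes(\mu\otimes\rho_2)^*$ is already a product of two $T_*$-invariant ergodic measures, and $\l$ is a joining with marginals $(\mu\otimes\rho)^*$, averaging the first marginals $(\mu\otimes\rho_1)^*$ of the components must recover $(\mu\otimes\rho)^*$; but the map $\rho'\mapsto(\mu\otimes\rho')^*$ is affine and injective on probability measures on $K$ (two Poisson processes with different intensities have mutually singular laws, and the map $\rho'\mapsto\mu\otimes\rho'$ is injective), so $\rho_1=\rho$ a.e. in the decomposition, and likewise $\rho_2=\rho$. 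Hence every ergodic component equals $(\mu\otimes\rho)^*\otimes(\mu\otimes\rho)^*$, so $\l=(\mu\otimes\rho)^*\otimes(\mu\otimes\rho)^*$, which is exactly confinement.

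The main obstacle I anticipate is the bookkeeping in the middle step: rigorously recasting a self-joining of the Poisson extension as a splitting of order $2$ over a \emph{disjoint-union} base system, and checking that this base has infinite ergodic index so that Theorem~\ref{thm:poisson_splitting} genuinely applies. One has to be careful that the condition $(\pi_*\times\pi_*)_*\l'=\mu^*\otimes\mu^*$ translates precisely into the splitting condition $\Sigma^{(2)}_*\l'=(\mu\sqcup\mu)^*$ after the marks are stripped, and that ergodicity of $\l'$ as a $T_*^{\times 2}$-invariant measure is the ergodicity required of the splitting joining. Everything else (ergodic decomposition, injectivity of $\rho'\mapsto(\mu\otimes\rho')^*$, infinite ergodic index of disjoint unions) is routine.
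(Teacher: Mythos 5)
Your reduction to an ergodic component and the use of Theorem \ref{cor:poisson_splitting} to identify each marginal of $\l'$ as $(\mu\otimes\rho_i)^*$ are fine, but the central step of your argument fails. You invoke Theorem \ref{thm:poisson_splitting} for a splitting over the base $(X\times K,\mu\otimes\rho_1,T\times\Id)\sqcup(X\times K,\mu\otimes\rho_2,T\times\Id)$, claiming this base has infinite ergodic index. It does not: it is not even ergodic. Each copy in the disjoint union is an invariant set of infinite, non-full measure, and moreover $T\times\Id$ acts trivially on the $K$-coordinate, so every set of the form $X\times B$ with $0<\rho_i(B)<1$ is invariant as well. (For the same reason, your earlier assertion that $X\sqcup X$ with $T\sqcup T$ has infinite ergodic index is false; ergodicity of $\mu^*\otimes\mu^*$ should instead be deduced from weak mixing of $\mu^*$, or from the absence of $(T\sqcup T)$-invariant sets of finite positive measure.) Consequently Theorem \ref{thm:poisson_splitting} does not apply as you use it, and the conclusion $\l'=(\mu\otimes\rho_1)^*\otimes(\mu\otimes\rho_2)^*$ --- which is essentially the whole confinement statement --- is left unproved. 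The marks cannot be absorbed into the base; they must be kept as marks. This is what the paper's proof does: it tags each point of $(\o_1,\o_2)$ with a label in $\{1,2\}$, obtaining an ergodic $(T\times\Id_{K\times\{1,2\}})_*$-invariant process $\eta$ on $(X\times K\times\{1,2\})^*$ whose projection to $X^*$ is $(2\mu)^*$ (by the hypothesis that the two $\pi_*$-projections are independent), and then applies Theorem \ref{cor:poisson_splitting} --- whose infinite-ergodic-index hypothesis bears only on $(X,2\mu,T)$ --- with mark space $K\times\{1,2\}$, yielding $\eta=(2\mu\otimes\chi)^*$; both the product structure of $\l$ and the identification of the marginals then follow from a void-probability computation giving $\chi=\rho\otimes\tfrac12(\delta_{\{1\}}+\delta_{\{2\}})$.

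A secondary flaw: in your last step you assert that $\rho'\mapsto(\mu\otimes\rho')^*$ is affine, which is false --- a mixture of Poisson laws is a Cox process, not a Poisson process. That step is repairable (each $(\mu\otimes\rho_i)^*$ is ergodic, so extremality of ergodic measures forces almost every component's marginal to equal the ergodic measure $(\mu\otimes\rho)^*$, and injectivity then gives $\rho_i=\rho$), but the gap in the main step above is what genuinely requires a different argument.
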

\begin{proof}
Set $\bfZ := ((X \times K)^*, (\mu \otimes \rho)^*, (T \times \Id)_*)$ and $\pi: (x, \k) \mapsto x$. Let $\l$ be a $2$-fold self joining of $\bfZ$ such that $(\pi_* \times \pi_*)_*\l = \mu^* \otimes \mu^*$. Since $\bfZ^*$ and $\bfX^*$ are ergodic, and even weakly mixing (see Section \ref{sect:intro_poisson} or \cite{marchat}), up to taking an ergodic component, one can assume that $\l$ is ergodic. Note that $\l$ is a probability measure on 
$$(X \times K)^* \times (X \times K)^*.$$
Both marginals of $\l$ on $(X \times K)^*$ yield a Poisson point process of intensity $\mu \otimes \rho$. We view the realization of both of those processes simultaneously on $X \times K$ and we tag the points coming from the first coordinate with a $1$, and the points coming from the second coordinate with a $2$. To do that formally, we define the map
$$\begin{array}{cccc}
\O:& (X \times K)^* \times (X \times K)^* & \to & (X \times K \times \{1, 2\})^*\\
& (\o_1, \o_2) & \longmapsto & \o_1 \otimes \delta_{\{1\}} + \o_2 \otimes \delta_{\{2\}}
\end{array},$$
so that 
\begin{eq} \label{eq:map_poisson-sum}
\O(\o_1, \o_2)( \cdot \times \{i\}) = \o_i.
\end{eq}
Consider $\eta := \O_*\l$. Since $\O \circ (T \times \mathrm{Id}_K)_* \times (T \times \mathrm{Id}_K)_* = (T \times \mathrm{Id}_{K \times \{1, 2\}})_* \circ \O$, we know that $\eta$ is $(T \times \Id_{K \times \{1, 2\}})_*$-invariant. Moreover, because $\l$ is assumed to be ergodic, the system $((X \times K \times \{1, 2\})^*, \eta, (T \times \mathrm{Id}_{K \times \{1, 2\}})_*)$ is ergodic. Finally, we need to look at the projection on $X^*$ via $\tilde\pi: (x, \k, i) \mapsto x$. To that end, we note that $(\tilde\pi_*)_*\eta = (\tilde\pi_* \circ \O)_*\l$, and 
\begin{align*}
\tilde\pi_* \circ \O(\o_1, \o_2) &= \tilde\pi_*(\o_1 \otimes \delta_{\{1\}} + \o_2 \otimes \delta_{\{2\}})\\
&= \tilde\pi_*(\o_1 \otimes \delta_{\{1\}}) + \tilde\pi_*(\o_2 \otimes \delta_{\{2\}}) = \pi_*\o_1 + \pi_*\o_2.
\end{align*}
However, $(\pi_* \times \pi_*)_*\l = \mu^* \otimes \mu^*$, which means that, in the above notation, $\pi_*\o_1$ and $\pi_*\o_2$ are independent Poisson processes of intensity $\mu$. It is known that the sum of such two independent Poisson processes is a Poisson process of intensity $2 \mu$. Therefore, $(\tilde\pi_*)_*\eta = (\tilde\pi_* \circ \O)_*\l = (2\mu)^*$. Theorem \ref{cor:poisson_splitting} tells us that there exists $\chi \in \P(K \times \{1, 2\})$ such that $\eta = (2\mu \otimes \chi)^*$. 

Now we show that $\chi = \rho \otimes (\frac{1}{2}(\delta_{\{1\}} + \delta_{\{2\}}))$. Let $A \subset X$ such that $0 < \mu(A) < \infty$, $B \subset K$ and $i \in \{1, 2\}$ 
\begin{align*}
e^{-2\mu(A) \chi(B \times \{i\})} &= \eta(\{\tilde \o \, ; \, \tilde \o(A \times B \times \{i\}) = 0\})\\
&= \l( \{(\o_1, \o_2) \, ; \, \o_i(A \times B) = 0\}) \; \text{ because of } \eqref{eq:map_poisson-sum}\\
&= (\mu \otimes \rho)^*(\{ \o \, ; \, \o(A \times B) = 0\}) = e^{-\mu(A) \rho(B)}.
\end{align*}
So, $\chi(B \times \{i\}) = \frac{1}{2} \rho(B)$. Therefore $\chi = \rho \otimes (\frac{1}{2}(\delta_{\{1\}} + \delta_{\{2\}}))$, so $\eta = (2\mu \otimes \rho \otimes (\frac{1}{2}(\delta_{\{1\}} + \delta_{\{2\}})))^*$. Finally, we get 
\begin{align*}
\l = \O^{-1}_*\eta &= \O^{-1}_*(2\mu \otimes \rho \otimes (\frac{1}{2}(\delta_{\{1\}} + \delta_{\{2\}})))^*\\
&= (\mu \otimes \rho)^* \times (\mu \otimes \rho)^*.
\end{align*}
\end{proof}

\subsection{Marked Point processes}
\label{sect:marked_point_process}

Let $(X, \mu)$ be a standard Borel space equipped with a $\s$-finite measure such that $\mu(X) = \infty$. Without loss of generality, we can assume that $X = \bbR_+$, thus enabling us to use the natural order on $\bbR^+$, but any other order could be used here. We may also assume that $\mu$ is the Lebesgue measure on $\bbR_+$ (by doing so, we ignore the case where $\mu$ has atoms, but for the rest of our work, that is not a problem). Up to a set of $\mu^*$-measure $0$ we can assume that the elements $\o$ of $(\bbR_+)^*$ are locally finite measures with no multiplicity, i.e. such that $\forall x \in \bbR_+, \, \o(\{x\}) \leq 1$. This allows us to define a sequence $(t_n)_{n \in \bbN}$ of measurable maps from $(\bbR_+)^*$ to $\bbR_+$ such that 
$$\o = \sum_{n \geq 1} \delta_{t_n(\o)},$$
and 
$$0 \leq t_1(\o) < t_2(\o) < \cdots.$$
Each $t_n(\o)$ gives us the position of the $n$-th atom of the counting measure $\o$.

Now consider a Polish space $K$. We will call a \emph{marked point process over $\mu^*$} a probability measure $\l$ on $(\bbR_+ \times K)^*$ such that $(\pi_*)_*\l = \mu^*$, where $\pi: (x, \kappa) \mapsto x$. We already manipulated marked point processes in the previous section, we are simply giving them a name now. We can describe marked point processes as follows: define the map 
$$
\begin{array}{cccc}
f: & (\bbR_+)^* \times K^\bbN & \to & (\bbR_+ \times K)^*\\
& (\o, (\kappa_n)_{n \geq 1}) & \longmapsto & \sum_{n \geq 1} \delta_{(t_n(\o), \kappa_n)}\\
\end{array}.
$$
Since $f$ is injective, we know that $f((\bbR_+)^* \times K^\bbN)$ is a Borel set and $f^{-1}$ is measurable, and we can write it as 
\begin{eq} \label{eq:iso_marked_point_process}
\Phi := f^{-1}: \tilde \o \mapsto (\pi_*(\tilde\o), (\kappa_n(\tilde\o))_{n \geq 1}),
\end{eq}
where $(\kappa_n(\tilde\o))_{n \geq 1}$ is called the sequence of the \emph{marks} of $\tilde \o$. For a marked point process $\l$, $\l\big(f((\bbR_+)^* \times K^\bbN) \big) =1$, therefore, up to a set of measure $0$, $f$ is a bijection. Moreover, we have the following result, from \cite[Lemma 6.4.VI]{Daley_Vere-Jones}:
\begin{prop} \label{prop:marked_point_process}
Let $\rho$ be a probability measure on $K$. The Poisson process $(\mu \otimes \rho)^*$ is a marked point process over $\mu^*$ and $f_*(\mu^* \otimes \rho^{\otimes \bbN}) = (\mu \otimes \rho)^*$. 
\end{prop}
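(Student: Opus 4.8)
The plan is to compute the Laplace functional of the probability measure $\nu := f_*(\mu^* \otimes \rho^{\otimes \bbN})$ on $(\bbR_+\times K)^*$ and to recognise it as that of the Poisson process $(\mu\otimes\rho)^*$; once the identity $\nu=(\mu\otimes\rho)^*$ is in hand, the assertion that $(\mu\otimes\rho)^*$ is a marked point process over $\mu^*$ will follow immediately by projecting along $\pi$. Before starting I would record that $f$ is well defined $\mu^*\otimes\rho^{\otimes\bbN}$-almost everywhere as a map into $(\bbR_+\times K)^*$: for $\mu^*$-a.e.\ $\o$ the enumeration $t_1(\o)<t_2(\o)<\cdots$ of its atoms exists (this is the point of passing to simple configurations in Section \ref{sect:marked_point_process}), and for every $b>0$ the window $[0,b]\times K$ carries only finitely many atoms of $\sum_{n\geq 1}\delta_{(t_n(\o),\kappa_n)}$, namely $\#\{n:t_n(\o)\leq b\}=\o([0,b])<\infty$, so the image configuration is locally finite. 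Measurability of $f$ is clear from its definition, and that of its a.e.\ inverse $\Phi$ was noted in \eqref{eq:iso_marked_point_process}.

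For the main step, fix a measurable $g:\bbR_+\times K\to[0,\infty)$. Writing a $\nu$-distributed configuration as $\tilde\o=\sum_{n\geq 1}\delta_{(t_n(\o),\kappa_n)}$ with $\o\sim\mu^*$ and $(\kappa_n)_{n\geq 1}\sim\rho^{\otimes\bbN}$ independent, one has $\int g\,\d\tilde\o=\sum_{n\geq 1}g(t_n(\o),\kappa_n)$, so, conditioning on $\o$ and using that the marks are i.i.d.\ $\rho$ (together with dominated convergence for the countable product of factors in $[0,1]$),
$$\int e^{-\int g\,\d\tilde\o}\,\d\nu(\tilde\o)\ =\ \int_{(\bbR_+)^*}\ \prod_{n\geq 1}\ \Big(\int_K e^{-g(t_n(\o),\kappa)}\,\d\rho(\kappa)\Big)\ \d\mu^*(\o).$$
Set $h(x):=\int_K e^{-g(x,\kappa)}\,\d\rho(\kappa)\in(0,1]$ and $\tilde g:=-\log h\geq 0$ (measurable by Tonelli); the inner product equals $\exp\big(-\sum_{n\geq 1}\tilde g(t_n(\o))\big)=\exp\big(-\int_{\bbR_+}\tilde g\,\d\o\big)$, so the right-hand side is the Laplace functional of $\mu^*$ evaluated at $\tilde g$, that is, $\exp\big(-\int_{\bbR_+}(1-e^{-\tilde g})\,\d\mu\big)$. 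Since $\rho(K)=1$ we have $1-e^{-\tilde g(x)}=1-h(x)=\int_K\big(1-e^{-g(x,\kappa)}\big)\,\d\rho(\kappa)$, and Tonelli turns $\int_{\bbR_+}(1-e^{-\tilde g})\,\d\mu$ into $\int_{\bbR_+\times K}(1-e^{-g})\,\d(\mu\otimes\rho)$. Hence the Laplace functional of $\nu$ equals $\exp\big(-\int_{\bbR_+\times K}(1-e^{-g})\,\d(\mu\otimes\rho)\big)$, which is exactly the Laplace functional of $(\mu\otimes\rho)^*$. Since the Laplace functional (equivalently, the probability generating functional) determines the law of a point process (see \cite{Daley_Vere-Jones}), this gives $\nu=(\mu\otimes\rho)^*$.

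To finish, I would push the identity $f_*(\mu^*\otimes\rho^{\otimes\bbN})=(\mu\otimes\rho)^*$ forward by $\pi_*$. Since $\pi_*\circ f\,(\o,(\kappa_n)_{n\geq 1})=\pi_*\big(\sum_{n\geq 1}\delta_{(t_n(\o),\kappa_n)}\big)=\sum_{n\geq 1}\delta_{t_n(\o)}=\o$, the composition $\pi_*\circ f$ is just the projection onto the first coordinate, whose pushforward of $\mu^*\otimes\rho^{\otimes\bbN}$ is $\mu^*$. Therefore $(\pi_*)_*(\mu\otimes\rho)^*=(\pi_*)_*f_*(\mu^*\otimes\rho^{\otimes\bbN})=\mu^*$, i.e.\ $(\mu\otimes\rho)^*$ is a marked point process over $\mu^*$, as claimed.

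I do not expect a genuine obstacle: this is essentially the Poisson marking theorem, and the only care needed is bookkeeping — justifying the Fubini/Tonelli exchanges and the conditioning on $\o$ (a disintegration of the product measure $\mu^*\otimes\rho^{\otimes\bbN}$ along its first marginal), and invoking the standard characterisation of a point-process law by its Laplace functional. An alternative that stays closer to the conventions used elsewhere in the paper is to verify directly that, under $\nu$, the counts on finitely many disjoint product sets $A_i\times B_i$ are independent Poisson variables with parameters $\mu(A_i)\,\rho(B_i)$, and then to pass from product sets to arbitrary Borel sets by a monotone-class argument in the $K$-coordinate; the Laplace-functional route is shorter.
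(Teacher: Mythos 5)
Your argument is correct, but it is worth pointing out that the paper does not prove this proposition at all: it simply quotes it from \cite[Lemma 6.4.VI]{Daley_Vere-Jones} (the classical marking lemma for Poisson processes). What you have written is essentially the standard proof of that lemma, reconstructed via the Laplace functional: you disintegrate $\mu^*\otimes\rho^{\otimes\bbN}$ over its first marginal, use independence of the marks to pull the $K$-integration inside, and recognise $\exp\bigl(-\int_{\bbR_+\times K}(1-e^{-g})\,\d(\mu\otimes\rho)\bigr)$, then conclude by the fact that the Laplace functional determines the law of a point process. The steps you flag as needing care are indeed the only delicate ones and they all go through: $h(x)=\int_K e^{-g(x,\kappa)}\,\d\rho(\kappa)\in(0,1]$ for finite-valued $g\geq 0$, the interchange of expectation with the infinite product of $[0,1]$-valued independent factors is legitimate (monotone limits of the partial products), and local finiteness of $f(\o,(\kappa_n))$ follows from $\tilde\o([0,b]\times K)=\o([0,b])<\infty$. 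The final projection step $\pi_*\circ f(\o,(\kappa_n)_{n\geq1})=\o$ correctly yields the ``marked point process over $\mu^*$'' half of the statement. So your route buys a self-contained proof where the paper relies on an external reference; conversely, the citation route is shorter and avoids having to restate the Laplace-functional characterisation, which in any case also comes from \cite{Daley_Vere-Jones}. Your alternative suggestion (checking independent Poisson counts on disjoint product sets $A_i\times B_i$ plus a monotone-class argument) would equally work and is closer in spirit to how $\mu^*$ is characterised in Section \ref{sect:intro_poisson}, but it is not needed.
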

In other words, if $\tilde \o \in (\bbR_+ \times K)^*$ is distributed according to the Poisson process of intensity $\mu \otimes \rho$, the sequence of marks $(\kappa_n(\tilde\o))_{n \geq 1}$ is i.i.d. of law $\rho$.

\subsection{A $\mathfrak{S}(\bbN)$-valued cocycle and its action on $K^\bbN$}
\label{sect:perm_group}

In Section \ref{sect:marked_point_process}, we saw that, assuming that $X = \bbR_+$, a point process on $(X \times K)^*$ can be represented on $X^* \times K^\bbN$, via the map introduced in \eqref{eq:iso_marked_point_process}:
$$\begin{array}{cccc}
\Phi: & (X \times K)^* & \to & X^* \times K^\bbN\\
& \tilde \o & \longmapsto & (\pi_*(\tilde\o), (\k_n(\tilde \o))_{n \geq 1})
\end{array}.$$
where $\k_n(\tilde \o)$ is the mark associated to the $n$-th point of $\tilde\o$, once the points of $\tilde\o$ are ordered according to their projection on $X$.
Now we mean to determine the dynamic on $X^* \times K^\bbN$ that would correspond to $(T \times \Id)^*$ on $(X \times K)^*$. To do this, we will need a tool to track how the order of the points of $\o \in X^*$ changes when $T_*$ is applied. 

The group $\mathfrak{S}(\bbN)$ is the group of the permutations of $\bbN$, i.e. the bijections from $\bbN$ onto itself. Equipped with the metric
$$d(\s, \tau) := \sum_{n \in \bbN} \frac{1}{2^n} \bbm1_{\s(n) \neq \tau(n)},$$
it is a Polish group, i.e. $(\frak{S}(\bbN), d)$ is a complete separable metric space and the map $(\s, \tau) \mapsto \s \circ \tau^{-1}$ is continuous. This group acts on $K^\bbN$ via the measurable action
\begin{eq} \label{eq:action_perm}
(\s, (\k_n)_{n \geq 1}) \mapsto (\k_{\s^{-1}(n)})_{n \geq 1}.
\end{eq}

We recall that, given $\o \in X^*$, we denote by $(t_n(\o))_{n \geq 1}$ the ordered sequence of the points of $\o$. To describe the action of $T_*$ on $(t_n(\o))_{n \geq 1}$, we define $\Psi(\omega)$ as the unique element of $\frak{S}(\bbN)$ such that for every $n \in \bbN$, $\Psi(\omega)(n)$ is the rank of the atom $T(t_n(\omega))$ in the counting measure $T_*\omega$, i.e. $T(t_n(\omega))$ is the $\Psi(\omega)(n)$-th atom of $T_*\omega$ when counting from left to right. This defines a cocycle
$$\Psi: X^* \to \mathfrak{S}(\bbN),$$
so that 
$$T(t_n(\o))= t_{\Psi(\o)(n)}( T_*\o).$$
We consider the skew-product define by the cocycle $\Psi$:
$$\begin{array}{cccc}
(T_*)_\Psi: & X^* \times K^\bbN & \to & X^* \times K^\bbN\\
& (\o, (\k_n)_{n \geq 1}) & \longmapsto & (T_*\o, (\k_{\Psi(\o)^{-1}(n)})_{n \geq 1})
\end{array}.$$
Then we check that 
\begin{eq} \label{eq:equivariance}
\begin{split}
\Phi \circ (T \times \Id)_* (\t\o) &= (\pi_*(T \times \Id)_* \t\o, (\k_n((T \times \Id)_* \t\o))_{n \geq 1})\\
&= (T_*\pi_*\t\o, (\k_{\Psi(\o)^{-1}(n)}(\t\o))_{n \geq 1})\\
&= (T_*)_\Psi(\pi_*\t\o, (\k_n(\t\o))_{n \geq 1})\\
&= (T_*)_\Psi \circ \Phi (\t\o).
\end{split}
\end{eq}
Combined with Proposition \ref{prop:marked_point_process}, this tells us that $\Phi$ is an isomorphism between the extensions
$$((X \times K)^*, (\mu \otimes \rho)^*, (T \times \Id)_*) \to (X^*, \mu^*, T_*),$$
and
$$(X^* \times K^\bbN, \mu^* \otimes \rho^{\otimes \bbN}, (T_*)_\Psi) \to (X^*, \mu^*, T_*).$$
Through this isomorphism, Theorem \ref{cor:poisson_splitting} becomes 
\begin{thm} \label{cor_poisson_splitting_bis}
Let $\bfX := (X, \mu, T)$ be a $\s$-finite measure preserving dynamical system of infinite ergodic index, and $K$ a standard Borel space. Let $\l$ be a $(T_*)_\Psi$-invariant probability measure on $X^* \times K^\bbN$ such that $\l( \cdot \times K^\bbN) = \mu^*$. If $(\l, (T_*)_\Psi)$ is ergodic, then there exists a probability measure $\rho$ on $K$ such that $\l = \mu^* \otimes \rho^{\otimes \bbN}$. 
\end{thm}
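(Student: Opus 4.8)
The plan is simply to transport Theorem~\ref{cor:poisson_splitting} across the isomorphism $\Phi$ constructed above, so essentially nothing new needs to be proved — the content is bookkeeping. Concretely, let $\l$ be a $(T_*)_\Psi$-invariant probability measure on $X^* \times K^\bbN$ with $\l(\cdot \times K^\bbN) = \mu^*$, and assume $(\l, (T_*)_\Psi)$ is ergodic. First I would push $\l$ forward by $\Phi^{-1}$ to obtain a probability measure $\t\l := (\Phi^{-1})_*\l$ on $(X \times K)^*$. Using the equivariance identity \eqref{eq:equivariance}, which says $\Phi \circ (T \times \Id)_* = (T_*)_\Psi \circ \Phi$, one checks that $\t\l$ is $(T \times \Id)_*$-invariant, and that ergodicity is preserved since $\Phi$ is an isomorphism. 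Next I would verify the marginal condition: since $\pi_* \circ \Phi^{-1}$ agrees with the projection onto the $X^*$-coordinate followed by $\Phi$ being identity on that coordinate — more precisely, because $\Phi(\t\o) = (\pi_*\t\o, (\k_n(\t\o))_{n\ge1})$, we have $(\pi_*)_* \t\l = \l(\cdot \times K^\bbN) = \mu^*$. Hence $\t\l$ is an ergodic invariant marked point process over $\mu^*$ in the sense of Theorem~\ref{cor:poisson_splitting}.

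Applying Theorem~\ref{cor:poisson_splitting} to $\t\l$ then yields a probability measure $\rho$ on $K$ with $\t\l = (\mu \otimes \rho)^*$. Finally I would push back through $\Phi$: by Proposition~\ref{prop:marked_point_process} we have $\Phi_*(\mu \otimes \rho)^* = f^{-1}_*(\mu \otimes \rho)^* = \mu^* \otimes \rho^{\otimes \bbN}$ (this is exactly the statement that $f_*(\mu^* \otimes \rho^{\otimes \bbN}) = (\mu \otimes \rho)^*$, read backwards), so
$$\l = \Phi_* \t\l = \Phi_*(\mu \otimes \rho)^* = \mu^* \otimes \rho^{\otimes \bbN},$$
which is the desired conclusion.

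There is no real obstacle here; the only points requiring a modicum of care are the measure-zero caveats — $\Phi$ is only a bijection off a $\mu^*$-null (resp. $(\mu\otimes\rho)^*$-null) set, and $(T_*)_\Psi$, $T_*$ are only defined on conull invariant subsets — so one should note that all the pushforward and invariance manipulations take place modulo null sets, which is harmless since $\l$ and the relevant systems are already only defined up to null sets. I would phrase the proof in two or three sentences invoking \eqref{eq:equivariance} and Proposition~\ref{prop:marked_point_process}, stating that Theorem~\ref{cor_poisson_splitting_bis} is merely the restatement of Theorem~\ref{cor:poisson_splitting} under the isomorphism $\Phi$ already established in the paragraph preceding the statement.
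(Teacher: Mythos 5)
Your proposal is correct and is exactly the paper's argument: the paper states Theorem \ref{cor_poisson_splitting_bis} as the translation of Theorem \ref{cor:poisson_splitting} through the isomorphism $\Phi$, relying on the equivariance \eqref{eq:equivariance} and Proposition \ref{prop:marked_point_process}, just as you do. Your handling of the marginal condition, ergodicity transfer, and the null-set caveats matches what the paper leaves implicit.
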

As an unexpected corollary, we get the following result, which is the De Finetti theorem written in the language of ergodic theory:
\begin{cor}[De Finetti, Hewitt-Savage] \label{cor:definetti}
Let $\rho_\infty$ be a $\frak{S}(\bbN)$-invariant (under the action defined by \eqref{eq:action_perm}) probability measure on $K^{\bbN}$ and $\rho$ its marginal on the first coordinate. The action $(K^\bbN, \rho_\infty, \frak{S}(\bbN))$ is ergodic if and only if $\rho_\infty = \rho^{\otimes \bbN}$.
\end{cor}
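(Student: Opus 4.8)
The plan is to read De~Finetti's theorem directly off Theorem~\ref{cor_poisson_splitting_bis}. Throughout, I would fix a $\s$-finite measure-preserving system $\bfX := (X, \mu, T)$ of infinite ergodic index; such systems exist (classical examples of infinitely ergodic transformations, or the base $(X,\mu,T)$ of the compact extension constructed in Section~\ref{sect:ext_infinte_erg_index}, which has infinite ergodic index since $\bfX^{\otimes k}$ is a factor of $\bfZ^{\otimes k}$). As $\bfX$ is ergodic with $\mu(X) = \infty$, it has no invariant set of finite positive measure, so $(X^*, \mu^*, T_*)$ is ergodic. I would prove the two implications separately, doing the easy direction first because it feeds into the other.

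\emph{If $\rho_\infty = \rho^{\otimes\bbN}$ then the action is ergodic.} First I would observe that $(X\times K, \mu\otimes\rho, T\times\Id)$ also has no invariant set of finite positive measure: a $(T\times\Id)$-invariant set $A$ has $T$-invariant slices $A_\k$, each of measure $0$ or $\infty$, so $(\mu\otimes\rho)(A) < \infty$ forces $(\mu\otimes\rho)(A) = 0$. Hence $((X\times K)^*, (\mu\otimes\rho)^*, (T\times\Id)_*)$ is ergodic, and through the isomorphism $\Phi$ established at the end of Section~\ref{sect:perm_group} (equation~\eqref{eq:equivariance} together with Proposition~\ref{prop:marked_point_process}) so is $(X^*\times K^\bbN, \mu^*\otimes\rho^{\otimes\bbN}, (T_*)_\Psi)$. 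Now for any $\mathfrak{S}(\bbN)$-invariant $A \subseteq K^\bbN$, the set $X^*\times A$ is $(T_*)_\Psi$-invariant, because the fibre map of $(T_*)_\Psi$ over $\o$ acts through the permutation $\Psi(\o) \in \mathfrak{S}(\bbN)$; ergodicity then gives $\rho^{\otimes\bbN}(A) = (\mu^*\otimes\rho^{\otimes\bbN})(X^*\times A) \in \{0, 1\}$, which is exactly ergodicity of $(K^\bbN, \rho^{\otimes\bbN}, \mathfrak{S}(\bbN))$.

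\emph{If the action is ergodic then $\rho_\infty = \rho^{\otimes\bbN}$.} Put $\l := \mu^*\otimes\rho_\infty$ on $X^*\times K^\bbN$. Using $\mathfrak{S}(\bbN)$-invariance of $\rho_\infty$ to absorb the permutation $\Psi(\o)$ on each fibre, together with $T_*$-invariance of $\mu^*$, one checks that $\l$ is $(T_*)_\Psi$-invariant with first marginal $\mu^*$. I would then decompose $\l = \int \l_y\, dm(y)$ into $(T_*)_\Psi$-ergodic components. The projection $X^*\times K^\bbN \to X^*$ intertwines the dynamics, so each $\l_y$ pushes forward to an ergodic $T_*$-invariant measure on $X^*$; as these average to the ergodic measure $\mu^*$, uniqueness of the ergodic decomposition forces the first marginal of $\l_y$ to be $\mu^*$ for $m$-a.e.\ $y$. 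For such $y$, Theorem~\ref{cor_poisson_splitting_bis} yields $\rho_y \in \calP(K)$ with $\l_y = \mu^*\otimes\rho_y^{\otimes\bbN}$, and projecting the decomposition to $K^\bbN$ gives the De~Finetti representation $\rho_\infty = \int \rho_y^{\otimes\bbN}\, dm(y)$. By the implication already proved, each $\rho_y^{\otimes\bbN}$ is $\mathfrak{S}(\bbN)$-ergodic, so this is an ergodic decomposition of $\rho_\infty$; since $\rho_\infty$ is ergodic it must be trivial, i.e. $\rho_\infty = \rho_y^{\otimes\bbN}$ for $m$-a.e.\ $y$, and comparing first marginals identifies $\rho_y$ with $\rho$, whence $\rho_\infty = \rho^{\otimes\bbN}$.

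I do not expect a genuine obstacle: all the depth sits in Theorem~\ref{thm:poisson_splitting} and its reformulation Theorem~\ref{cor_poisson_splitting_bis}, and the corollary is really the observation that the Poisson-splitting phenomenon encodes exchangeability. The only points calling for care are the two appeals to uniqueness of the ergodic decomposition — one for the single transformation $(T_*)_\Psi$ on the standard space $X^*\times K^\bbN$ (routine), and one for the $\mathfrak{S}(\bbN)$-action (which one may instead handle by hand: for $g$ in a countable determining family of bounded functions on $K$, the Cesàro limit $\k \mapsto \lim_N \frac1N \sum_{n\le N} g(\k_n)$ is $\mathfrak{S}(\bbN)$-invariant and, by the strong law of large numbers, is $\rho_y^{\otimes\bbN}$-a.e.\ equal to $\int g\, d\rho_y$, hence $\rho_\infty$-a.e.\ constant, which pins down $\rho_y$ for $m$-a.e.\ $y$) — together with the bookkeeping that keeps the first marginal equal to $\mu^*$ when passing to ergodic components.
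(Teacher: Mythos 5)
Your proof is correct, and for the substantive implication it follows the paper's route exactly: make $\mu^*\otimes\rho_\infty$ a $(T_*)_\Psi$-invariant measure over a fixed infinite-ergodic-index base, decompose it into ergodic components, apply Theorem~\ref{cor_poisson_splitting_bis} to each component, and conclude by extremality/ergodicity of $\rho_\infty$ that the resulting mixture $\rho_\infty=\int\gamma^{\otimes\bbN}\,d\t\bbP(\gamma)$ is trivial. Where you diverge is in the details around this skeleton. First, the easy direction: the paper simply remarks that $\rho^{\otimes\bbN}$ is $\frak{S}(\bbN)$-ergodic ``for the same reason that it is ergodic under the shift'' (in effect the classical Hewitt--Savage argument), whereas you deduce it from ergodicity of the marked Poisson suspension $((X\times K)^*,(\mu\otimes\rho)^*,(T\times\Id)_*)$ transported through $\Phi$, lifting an exchangeable set $A$ to the $(T_*)_\Psi$-invariant set $X^*\times A$; this keeps the whole corollary inside the paper's Poisson framework at the cost of invoking the ergodicity criterion for suspensions, while the paper's version is shorter but leans on an external classical fact. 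Second, you make explicit a point the paper glosses over: that $m$-a.e.\ ergodic component $\l_y$ of $\mu^*\otimes\rho_\infty$ has first marginal $\mu^*$ (via ergodicity of $\mu^*$ and uniqueness of its ergodic decomposition), which is exactly the hypothesis needed to apply Theorem~\ref{cor_poisson_splitting_bis} componentwise; your alternative SLLN/Ces\`aro argument for identifying the components is also a sound substitute for the extremality appeal. So the proposal is a valid proof, essentially the paper's argument with a more self-contained easy direction and a more careful bookkeeping of marginals.
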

\begin{proof}
If $\rho_\infty = \rho^{\otimes \bbN}$, it is ergodic under the action of $\frak{S}(\bbN)$ for the same reason that it is ergodic under the shift. Following this remark, the proof is quite straightforward.

Assume that $(K^\bbN, \rho_\infty, \frak{S}(\bbN))$ is ergodic. Let $(X, \mu, T)$ be a dynamical system of infinite ergodic index. Since $\rho_\infty$ is $\frak{S}$-invariant, it follows that $\mu^* \otimes \rho_\infty$ is $(T_*)_\Psi$-invariant. We write the ergodic decomposition of $\mu^* \otimes \rho_\infty$ by 
$$\mu^* \otimes \rho_\infty = \int_{\P(X^* \times K^{\bbN})} \l d\mathbb{P}(\l).$$
Then Theorem \ref{cor_poisson_splitting_bis} tells us that the ergodic decomposition of $\mu^* \otimes \rho_\infty$ is of the form 
$$\mu^* \otimes \rho_\infty = \int_{\P(K)} \mu^* \otimes \gamma^{\otimes \bbN} d\t\bbP(\gamma) = \mu^* \otimes \int_{\P(K)} \gamma^{\otimes \bbN} d \t\bbP(\gamma),$$
where $\t\bbP := \Theta_*\bbP$ and $\Theta$ sends a measure on $X^* \times K^{\bbN}$ to its projection on the first coordinate on $K$. So $\rho_\infty = \int_{\P(K)} \gamma^{\otimes \bbN} d \t\bbP(\gamma)$. However, each measure $\gamma^{\otimes \bbN}$ is $\frak{S}(\bbN)$-invariant, and $\rho_\infty$ is ergodic under $\frak{S}(\bbN)$. Therefore, there exists $\gamma \in \P(K)$ such that $\rho_\infty = \gamma^{\otimes \bbN}$. 
\end{proof}

\subsection{A non-confined Poisson extension}
\label{sect:non-confined}

We give here an example of a non-trivial non-confined Poisson extension, to show that the infinite ergodic index assumption in Theorem \ref{thm:T_Id_confined} cannot be removed. Take $X := \bbR$, $\mu$ the Lebesgue measure on $\bbR$, and 
$$T: x \mapsto x+1.$$
The system $(X, \mu, T)$ is not ergodic and not conservative and its ergodic index is $0$, but the Poisson suspension $(X^*, \mu^*, T_*)$ is ergodic, since it is Bernoulli. We get the following:
\begin{prop}
Let $(K, \rho)$ be a standard probability space. The extension $((X \times K)^*, (\mu \otimes \rho)^*, (T \times \Id)_*) \arr (X^*, \mu^*, T_*)$ is not confined. 
\end{prop}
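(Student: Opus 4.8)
The plan is to exhibit an explicit non-product self-joining of $\bfZ := ((X \times K)^*, (\mu \otimes \rho)^*, (T \times \Id)_*)$ in which the two copies of $\pi_*$ are independent, thus violating condition (i) of Definition~\ref{def:confined}. The key observation is that with $T \colon x \mapsto x+1$ on $X = \bbR$, the Poisson suspension $(X^*, \mu^*, T_*)$ is Bernoulli, but the dynamics does nothing to the $K$-marks: $(T \times \Id)_*$ merely translates the configuration in the $X$-direction while leaving each mark attached to its point. So one should be able to ``couple the marks'' of two independent copies without breaking $T_*$-invariance.

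Concretely, I would use the marked-point-process representation of Section~\ref{sect:marked_point_process}. Realize one copy of $\bfZ$ as a Poisson process $\o$ of intensity $\mu$ on $\bbR$ together with an i.i.d.\ sequence of $\rho$-distributed marks $(\k_n)_{n \geq 1}$ indexed by the points of $\o$ in increasing order, and build the joining by using the \emph{same} point configuration $\o$ shifted is not quite right --- instead I would take two \emph{independent} Poisson configurations (so that $(\pi_* \times \pi_*)_*\l = \mu^* \otimes \mu^*$ as required) but correlate the marks via some nontrivial coupling that is invariant under the relabelling induced by $T$. The cleanest choice: let $\l$ be the law of $(\t\o_1, \t\o_2)$ where $\t\o_1$ is a Poisson marked process of intensity $\mu \otimes \rho$, and $\t\o_2$ is obtained from $\t\o_1$ by keeping the \emph{same} underlying points and marks but this still has equal projections, not independent ones. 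The correct construction is to take $\t\o_2$ to be an independent Poisson marked process whose marks are forced to agree with those of $\t\o_1$ on some common randomness --- e.g.\ via a shared $\{0,1\}$-valued ``environment'' variable that biases both mark sequences. Since $T = $ translation commutes with everything and does not mix the order of points in a way that destroys such a coupling (the permutation cocycle $\Psi$ from Section~\ref{sect:perm_group} for a translation is trivial, as translation preserves order on $\bbR$), the resulting $\l$ is $(T \times \Id)_* \times (T \times \Id)_*$-invariant, has the right marginals, satisfies $(\pi_* \times \pi_*)_*\l = \mu^* \otimes \mu^*$, but is not the product joining because the two mark fields are correlated.

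The main obstacle --- and the step I would be most careful about --- is verifying that the correlated-mark joining genuinely has \emph{independent} $\pi_*$-projections while being non-product, i.e.\ that the correlation is carried entirely by the $K$-coordinate and not secretly by the geometry of the point configurations. This forces the coupling of the marks to be done through an exchangeable-in-$n$ mechanism (so that it survives the order-preserving action of $T$), which is exactly why the infinite-ergodic-index hypothesis of Theorem~\ref{thm:T_Id_confined} is what rules this out: there, the permutation cocycle $\Psi$ genuinely shuffles the marks and Theorem~\ref{cor_poisson_splitting_bis}/\ref{thm:poisson_splitting} forces any ergodic invariant measure on $X^* \times K^\bbN$ to be a product $\mu^* \otimes \gamma^{\otimes \bbN}$; for the translation, $\Psi$ is trivial so no such rigidity holds. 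I would conclude by writing down the simplest such $\l$ --- for instance, marks $\k_n^{(1)} = \k_n^{(2)}$ for all $n$ (perfect coupling of the two i.i.d.\ mark sequences over independent point sets), which is visibly $(T \times \Id)_*$-bi-invariant, has marginals $(\mu \otimes \rho)^*$, projects to $\mu^* \otimes \mu^*$ under $\pi_* \times \pi_*$, and is not $(\mu \otimes \rho)^* \otimes (\mu \otimes \rho)^*$ --- and checking these four properties directly from the representation of Proposition~\ref{prop:marked_point_process}.
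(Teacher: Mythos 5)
Your overall aim (an explicit non-product self-joining with independent $\pi_*$-projections, built in the marked-point-process picture) is the same as the paper's, but the concrete coupling you propose does not work, and the reason it fails is exactly the point you dismiss. For $T:x\mapsto x+1$ on $\bbR$ the permutation cocycle is \emph{not} trivial: translation preserves the order of the points but not their indexing relative to the origin, and the paper computes $\t\Psi(\omega)=S^{\omega([-1,0[)}$, a shift by the (random) number of points in $[-1,0[$. Now take your ``simplest'' joining: two independent configurations $\omega_1,\omega_2$ and a single i.i.d.\ mark sequence used for both, i.e.\ $\kappa_n^{(1)}=\kappa_n^{(2)}$ for all $n$. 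Applying the dynamics shifts the first mark sequence by $a=\omega_1([-1,0[)$ and the second by $b=\omega_2([-1,0[)$; since $\omega_1,\omega_2$ are independent, $a\neq b$ with positive probability, so in the image measure the rank-matched marks of the two configurations are (conditionally) independent draws from $\rho$ rather than equal. Hence your measure is not $(T\times\Id)_*\times(T\times\Id)_*$-invariant (unless $\rho$ is a Dirac mass, in which case the joining is the product and proves nothing). The same computation kills any coupling of the two mark sequences that is defined index-by-index and independently of the configurations: after one step the coupling at index $n$ pairs marks coming from different original indices, so invariance forces the coupling to be $\rho\otimes\rho$. Your fallback ``shared environment'' variable fails for a different reason: mixing the mark law over a common random parameter makes each marginal a mixture of product measures, not $(\mu\otimes\rho)^*$, so the marginal requirement is violated.

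What is actually needed --- and what the paper does --- is a coupling rule expressed through the \emph{relative geometric positions} of the two configurations, which are genuinely translation-invariant: conditionally on $(\omega_1,\omega_2)$, the marks of $\omega_1$ are i.i.d.\ $\rho$, and the mark of the $n$-th point of $\omega_2$ is copied from the mark of the first point of $\omega_1$ lying in $[t_n(\omega_2),t_{n+1}(\omega_2)[$ if that interval contains a point of $\omega_1$, and drawn independently from $\rho$ otherwise. Invariance is then immediate because the rule depends only on relative positions, but the second marginal is no longer obvious: one must check that the marks of $\omega_2$ are still i.i.d.\ $\rho$ given $\omega_2$, which the paper does by a conditional-law argument (the copied mark $\kappa_\ell(\omega_1)$ is independent of everything revealed so far). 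Your proposal is missing both the invariant coupling mechanism and this marginal verification, so as it stands the proof has a genuine gap.
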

\begin{proof}
We will make use of the setup presented in the previous section for the study of marked point processes. We make some slight adjustments since now $X = \bbR$ (instead of $\bbR_+$): define a sequence $(t_n(\o))_{n \in \bbZ}$ such that 
$$\o = \sum_{n \in \bbZ} \delta_{t_n(\o)},$$
and
$$\cdots t_{-1}(\o) < t_0(\o) < 0 \leq t_1(\o) < t_2(\o) \cdots.$$
We then also define a cocycle 
$$\t\Psi: X^* \to \frak{S}(\bbZ),$$
such that 
$$T(t_n(\o)) = t_{\t\Psi(\o)(n)}(T_*\o).$$
In other words, $\t\Psi(\o)(n)$ is the rank of the atom $T(t_n(\o))$ in $T_*\o$. In our present case, the map $\t\Psi$ can be described explicitly: denote the shift $S: k \mapsto k+1$ and then one can check that
$$\t\Psi: \o \mapsto S^{\o([-1, 0[)}
%\left\{
%\begin{array}{ll}
%\Id & \text{ if } \, t_0(\o) < -1\\
%S & \text{ if } \, t_0(\o) \geq -1
%\end{array}\right.
.$$
Indeed, the shift in the numbering of the atoms of $\o$ is only affected by the atoms in $[-1, 0[$ as they go from being smaller than $0$ to being greater than $0$ once we apply $T$. 

As in the previous section, we get an isomorphism in between the extensions
$$((X \times K)^*, (\mu \otimes \rho)^*, (T \times \Id)_*) \to (X^*, \mu^*, T_*),$$
and
$$(X^* \times K^\bbZ, \mu^* \otimes \rho^{\otimes \bbZ}, (T_*)_{\t\Psi}) \to (X^*, \mu^*, T_*).$$
We prove our proposition by showing that the second extension is not confined. We need to build a non-product self-joining of $\mu^* \otimes \rho^{\otimes \bbZ}$ whose projection via $\t\pi \times \t\pi$ is $\mu^* \otimes \mu^*$. It will be more convenient to describe this joining as a measure on $X^* \times X^* \times K^\bbZ \times K^\bbZ$. We start with the marginal on $X^* \times X^*$ which has to be $\l( \cdot \times \cdot \times K^\bbZ \times K^\bbZ) = \mu^* \otimes \mu^*$, and for $(\o_1, \o_2) \in X^* \times X^*$, the conditional law $\l_{(\o_1, \o_2)}$ is as follows. The sequence of marks of $\o_1$, $(\k_n(\o_1))_{n \in \bbZ}$ is chosen with probability $\rho^{\otimes \bbZ}$. For the choice of $\k_n(\o_2)$, we distinguish two situations:
\begin{itemize}
\item If $\o_1([t_n(\o_2), t_{n+1}(\o_2)[) \geq 1$, we set 
$$\ell := \min\{k \in \bbZ \, | \, t_k(\o_1) \in [t_n(\o_2), t_{n+1}(\o_2)[\},$$
and then we choose $\k_n(\o_2) := \k_\ell(\o_1)$. 
\item If $\o_1([t_n(\o_2), t_{n+1}(\o_2)[) = 0$, we choose $\k_n(\o_2)$ with law $\rho$, independently from all the other marks.
\end{itemize}
The construction of $\l$ is concluded by taking
$$\l := \int \delta_{\o_1} \otimes \delta_{\o_2} \otimes \l_{(\o_1, \o_2)} \, d(\mu^* \otimes \mu^*)(\o_1, \o_2).$$
Note that our choices for $(\k_n(\o_1))_{n \in \bbZ}$ and $(\k_n(\o_2))_{n \in \bbZ}$ depend only on the relative positions of the points $\{t_n(\o_1)\}_{n \in \bbZ}$ and $\{t_n(\o_2)\}_{n \in \bbZ}$. Since those relative positions are preserved under application of $(T_* \times T_*)$, the measure $\l$ is $(T_*)_{\t\Psi} \times (T_*)_{\t\Psi}$-invariant (up to a permutation of coordinates).

From our construction, it is clear that $\l$ is not a product measure and that 
$$\l(\cdot \times X^* \times \cdot \times K^\bbZ) = \mu^* \otimes \rho^{\otimes \bbZ}.$$
We are left with checking that $\l(X^* \times \cdot \times K^\bbZ \times \cdot) = \mu^* \otimes \rho^{\otimes \bbZ}$. Consider that $\o_1, \o_2$ and $(\k_n(\o_2))_{n < n_0}$ are known and compute the law of $\k_{n_0}(\o_2)$: if $\o_1([t_n(\o_2), t_{n+1}(\o_2)[) = 0$, it follows from our construction that the law of $\k_{n_0}(\o_2)$ is $\rho$. If $\o_1([t_n(\o_2), t_{n+1}(\o_2)[) \geq 1$, we have $\k_n(\o_2) = \k_\ell(\o_1)$ (see above for the definition of $\ell$). One can check that $(\k_n(\o_2))_{n < n_0}$ only informs us on (some of) the values of $(\k_n(\o_1))_{n < \ell}$ and $\k_\ell(\o_1)$ is independent of $\o_1, \o_2$ and $(\k_n(\o_1))_{n < \ell}$. So, even with $\o_1, \o_2$ and $(\k_n(\o_2))_{n < n_0}$ fixed, the law of $\k_\ell(\o_1)$ is $\rho$, so the law of $\k_n(\o_2)$ is also $\rho$. 

To sum up, up to a permutation of coordinates, $\l$ is a $(T_*)_{\t\Psi} \times (T_*)_{\t\Psi}$-invariant measure on $X^* \times K^\bbZ \times X^*\times K^\bbZ$, such that 
$$\l(\cdot \times \cdot \times X^* \times K^\bbZ) = \mu^* \otimes \rho^{\otimes \bbZ} \, \text{ and } \, \l(X^* \times K^\bbZ \times \cdot \times \cdot) = \mu^* \otimes \rho^{\otimes \bbZ}.$$
So it a self-joining of $\mu^* \otimes \rho^{\otimes \bbZ}$. Moreover, it projects onto $\mu^* \otimes \mu^*$ without being equal to the product measure $\mu^* \otimes \rho^{\otimes \bbZ} \otimes \mu^* \otimes \rho^{\otimes \bbZ}$. This precisely means that the extension 
$$(X^* \times K^\bbZ, \mu^* \otimes \rho^{\otimes \bbZ}, (T_*)_{\t\Psi}) \to (X^*, \mu^*, T_*),$$
is not confined.
\end{proof}

\section{A Poisson suspension over a compact extension}
\label{sect:compact_poisson_ext}

In this section, we are interested in the Poisson extension a over compact extension, i.e. over the system $\bfZ$ given by
$$\begin{array}{cccc}
T_\phi: & X \times G & \to & X \times G\\
& (x, g) & \longmapsto & (Tx, g \cdot \phi(x))
\end{array},$$
for some compact group $G$ and measurable cocycle $\phi: X \arr G$. Our goal will be to show that 
\begin{thm} \label{thm:conf_T_phi}
Let $\bfX := (X, \mu, T)$ be a dynamical system of infinite ergodic index. If the compact extension $(X \times G, \mu \otimes m_G, T_\phi)$ is also of infinite ergodic index, then the Poisson extension 
$$((X \times G)^*, (\mu \otimes m_G)^*, (T_\phi)_*) \overset{\pi_*}{\to} (X^*, \mu^*, T_*)$$
is confined.
\end{thm}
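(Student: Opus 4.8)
The plan is to mimic the strategy of Theorem~\ref{thm:T_Id_confined}, but to replace the appeal to the Poisson splitting theorem (which is unavailable here, since $(X\times G,\mu\otimes m_G,T_\phi)$ is \emph{not} of the form $T\times\Id$ and we no longer know $(X,\mu,T)$ itself has infinite ergodic index in a way that would directly apply) by a two–step reduction: first use the infinite ergodic index of $\bfZ=(X\times G,\mu\otimes m_G,T_\phi)$ to invoke Theorem~\ref{thm:poisson_splitting} at the level of the Poisson suspension $\bfZ^*$, and then peel off the group coordinate using Furstenberg's Lemma~\ref{lem:furstenberg}. Concretely, let $\l$ be a $2$-fold self-joining of $\bfZ^*=((X\times G)^*,(\mu\otimes m_G)^*,(T_\phi)_*)$ with $(\pi_*\times\pi_*)_*\l=\mu^*\otimes\mu^*$; since $\bfZ^*$ and $\bfX^*$ are weakly mixing, we may pass to an ergodic component and assume $\l$ ergodic. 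Both marginals of $\l$ are $(\mu\otimes m_G)^*$. The pair $(\l,(T_\phi)_*^{\times 2})$, viewed via the superposition map $\Sigma^{(2)}\colon (X\times G)^*\times(X\times G)^*\to(X\times G)^*$, $(\o_1,\o_2)\mapsto\o_1+\o_2$, is a candidate ergodic splitting of order $2$ of the Poisson suspension over $(X\times G,\,2(\mu\otimes m_G),\,T_\phi)$ — or, keeping track of which process each atom came from as in the proof of Theorem~\ref{thm:T_Id_confined}, an ergodic splitting recorded on $(X\times G\times\{1,2\})^*$.

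The first real step is therefore to set up the bookkeeping map $\O\colon (X\times G)^*\times(X\times G)^*\to(X\times G\times\{1,2\})^*$, $(\o_1,\o_2)\mapsto \o_1\otimes\delta_{\{1\}}+\o_2\otimes\delta_{\{2\}}$, put $\eta:=\O_*\l$, and check (exactly as before) that $\eta$ is $(T_\phi\times\Id_{\{1,2\}})_*$-invariant and ergodic, and that its pushforward under the projection $(x,g,i)\mapsto x$ is $(2\mu)^*$. Now apply Theorem~\ref{thm:poisson_splitting}: since $\bfZ=(X\times G,\mu\otimes m_G,T_\phi)$ has infinite ergodic index, so does $(X\times G\times\{1,2\},\,2(\mu\otimes m_G)\otimes c,\,T_\phi\times\Id)$ for the counting… —more carefully, the splitting theorem is applied to $\bfZ$ itself: the superposition $\Sigma^{(2)}_*\l=\mu\otimes m_G$-level statement forces $\l$ to be a Poisson splitting, i.e. there are $\s$-finite measures $\mu_1,\mu_2$ on $X\times G$ with $\l=\mu_1^*\otimes\mu_2^*$. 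Using the marked version ($\eta$) one identifies $\mu_1=\mu_2=\mu\otimes m_G$ exactly by the empty-set / Laplace-functional computation used at the end of the proof of Theorem~\ref{thm:T_Id_confined} (evaluate $\eta$ on $\{\,\tilde\o(A\times B\times\{i\})=0\,\}$ for $0<\mu(A)<\infty$, $B\subset G$). This already gives $\l=(\mu\otimes m_G)^*\otimes(\mu\otimes m_G)^*$, \emph{provided} the splitting theorem truly applies, and that is the crux: Theorem~\ref{thm:poisson_splitting} needs infinite ergodic index of the base of the Poisson suspension one is splitting, which here is $\bfZ$, and that is precisely our hypothesis — so in fact the naive argument does go through, and the role of Furstenberg's lemma is only to handle the subtlety that $(\pi_*\times\pi_*)_*\l=\mu^*\otimes\mu^*$ constrains the \emph{$X$-marginal} of the superposed process, not its $X\times G$-marginal, which is what Theorem~\ref{thm:poisson_splitting} would want.

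This is the point where the "intricate manipulations" mentioned in the introduction enter, and it is the step I expect to be the main obstacle. The clean way to organize it: since $\l$ is ergodic and its two $(X\times G)^*$-marginals are $(\mu\otimes m_G)^*$, consider the marginal $\bar\l$ of $\eta$ after forgetting the $G$-coordinate, living on $(X\times\{1,2\})^*$; it is an ergodic $(T\times\Id)_*$-invariant marked point process over $(2\mu)^*$, so Theorem~\ref{cor:poisson_splitting} (applied to $(X,\mu,T)$, which has infinite ergodic index) gives $\bar\l=(2\mu\otimes\tfrac12(\delta_1+\delta_2))^*$, i.e. the $X$-superposition is genuinely Poisson$(2\mu)$ with i.i.d. fair $\{1,2\}$ labels. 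Then $\eta$ itself is an ergodic invariant measure on $(X\times G\times\{1,2\})^*$ lying over this Poisson$(2\mu)$ with labels, and its projection to $(X\times\{1,2\})^*$ is the genuine Poisson process there; since the fibre group $G$ acts by the ergodic compact extension $T_\phi$ (promoted to the Poisson level, where ergodicity of the compact Poisson extension over $\bfX^*$ follows from infinite ergodic index of $\bfZ$ and the ergodicity criterion for Poisson suspensions), Furstenberg's Lemma~\ref{lem:furstenberg} — applied to the Poisson suspension, or rather its relatively-uniquely-ergodic reformulation for compact group extensions — forces $\eta$ to be the relatively independent product, namely the Poisson process of intensity $2\mu\otimes m_G\otimes\tfrac12(\delta_1+\delta_2)$. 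Undoing $\O$ yields $\l=(\mu\otimes m_G)^*\otimes(\mu\otimes m_G)^*$, which is exactly confinement of the extension in \eqref{eq:poisson_ext}. The delicate points to get right are: (a) verifying that the compact extension structure survives the passage to Poisson suspensions in a form to which Furstenberg's lemma literally applies (one works with the group $G$ acting diagonally on all atoms, i.e. the $G$-action $g\cdot\sum\delta_{(x_i,g_i)}=\sum\delta_{(x_i,g\cdot g_i)}$ commuting with $(T_\phi)_*$, and checks the relevant ergodicity); and (b) keeping the two copies straight so that the label measure comes out as $\tfrac12(\delta_1+\delta_2)$ rather than something correlated with $G$ — this is where the explicit empty-event computation, as in the end of the proof of Theorem~\ref{thm:T_Id_confined}, is indispensable.
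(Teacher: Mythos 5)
Your global architecture does match the paper's: pass to an ergodic component, superpose the two copies, and combine a relative unique ergodicity statement for the Poisson extension with Theorem \ref{thm:poisson_splitting} to force the product joining. This is exactly what the paper does in Section \ref{sect:final_proof_conf_Tphi}: it sets $\rho:=\Sigma_*\l$, computes $(\pi_*)_*\rho=(2\mu)^*$, invokes the relative unique ergodicity result (Theorem \ref{thm:Poisson_furstenberg}) to get $\rho=(2\mu\otimes m_G)^*$, and then applies the splitting theorem to conclude $\l=(\mu\otimes m_G)^*\otimes(\mu\otimes m_G)^*$. Your steps of tagging the two copies, identifying the $X$-level superposition as $(2\mu)^*$ with i.i.d.\ fair labels via Theorem \ref{cor:poisson_splitting}, and the final splitting step are all sound (the label bookkeeping is in fact optional in the paper's version).

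The genuine gap is your step where ``Furstenberg's Lemma \ref{lem:furstenberg} applied to the Poisson suspension'' is supposed to force the $G$-marks to be i.i.d.\ Haar --- precisely the point you flag as delicate point (a), and precisely what the paper spends Sections \ref{sect:keane}--\ref{sect:thm_conf_T_phi} proving. The Poisson extension $((X\times G)^*,\cdot,(T_\phi)_*)\to(X^*,\mu^*,T_*)$ is \emph{not} a compact group extension: the fibre over $\o\in X^*$ is (up to null sets) $G^{\bbN}$, and the dynamics acts through a cocycle with values in $G^{\bbN}\rtimes\mathfrak{S}(\bbN)$, whose permutation component is not compact. The diagonal action $g\cdot\sum_i\delta_{(x_i,g_i)}=\sum_i\delta_{(x_i,g\cdot g_i)}$ does commute with $(T_\phi)_*$, but it is nowhere near transitive on the fibres, so the extension is not the compact extension associated with that action, and Furstenberg's relative unique ergodicity gives no control over the joint law of the marks. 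If this shortcut worked, ergodicity of $(T_\phi)_*$ alone would suffice and the infinite-ergodic-index hypothesis on $T_\phi$ would play no role. The paper's substitute (the proof of Theorem \ref{thm:Poisson_furstenberg}) is to fix $k$, distinguish the first $k$ atoms (Proposition \ref{thm:distinguish_points}) and induce on the event that they return to their initial ranks, which trivializes the permutation part of the cocycle on the first $k$ marks; the induced system together with $(g_1,\dots,g_k)$ is then an honest $G^k$-compact extension, its ergodicity is supplied by Corollary \ref{cor:keane_poisson} (this is where the infinite ergodic index of $T_\phi$ enters, via ergodicity of $(X\times G,\mu\otimes m_G,T_\phi)^{\otimes k}\otimes(X^*,\mu^*,T_*)$), and only then does Lemma \ref{lem:furstenberg} apply, giving $\l_k=\mu^*\otimes m_G^{\otimes k}$ and, letting $k\to\infty$, the needed relative unique ergodicity. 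Without some argument of this kind, your proof is incomplete at its central step.
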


We start with the Sections \ref{sect:keane} and \ref{sect:distinguish_points}, where we introduce some useful notions and results from the literature. We then prove the main technical step in the proof of our theorem in Section \ref{sect:thm_conf_T_phi}. We conclude in Section \ref{sect:final_proof_conf_Tphi}
.

\subsection{Ergodicity of Cartesian products in spectral theory}
\label{sect:keane}

We present briefly some results on the ergodicity of Cartesian products of $\s$-finite measure preserving dynamical systems. 

We start by introducing some classic notions in spectral theory. Let $(X, \mu, T)$ be a $\s$-finite measure preserving dynamical system. Consider the space $L^2(X, \mu)$ and the Koopman operator
$$\begin{array}{cccc}
U_T: & L^2(X, \mu) & \to & L^2(X, \mu)\\
& f & \longmapsto & f \circ T
\end{array}.$$

If $\mu$ is a finite measure, denote $L^2_0(X, \mu) \subset L^2(X, \mu)$ the subspace orthogonal to the space of constant maps. For $f \in L^2(X, \mu)$, the spectral measure of $f$, $\s_f$, is defined as the measure on $\bbU$ such that, for every $n \in \bbZ$
$$\widehat{\s_f} (n) = \int_X f \, \overline{U_T^n f} d\mu.$$
There exists a finite measure $\s_X^0$ on $\bbU$, unique up to equivalence, such that 
\begin{itemize}
\item for every $f \in L^2_0(X, \mu)$, $\s_f \ll \s_X^0$,
\item and for every finite measure $\s$ such that $\s \ll \s_X^0$, there exists $f \in L^2_0(X, \mu)$ such that $\s = \s_f$.
\end{itemize}
 It is the \emph{restricted maximal spectral type} of $(X, \mu, T)$. 

If $\mu$ is a $\s$-finite measure, we define a $L^\infty$-eigenvalue as $\l \in \bbC$ such that there exists $f \in L^\infty(X, \mu)$ non-zero a.e. such that 
$$f \circ T = \l f.$$
Such a map $f$ is called a $L^\infty$-eigenfunction. Denote $e(T)$ the set of $L^\infty$-eigenvalues of $(X, \mu, T)$, which is a sub-group of $\bbU$, provided $T$ is conservative and ergodic (see \cite[Section 2.6]{aaronson}). The notion of $L^\infty$-eigenvalues is mainly useful in the infinite measure case. Indeed, if $\mu(X) < \infty$, we have $L^\infty(X, \mu) \subset L^2(X, \mu)$, so $L^\infty$-eigenfunctions are simply eigenvectors of the Koopman operator $U_T$.

We will use the following ergodicity criterion, due to Keane (see \cite[Section 2.7]{aaronson}):
\begin{thm} \label{thm:keane}
Let $\bfX := (X, \mu, T)$ be a conservative and ergodic dynamical system and $\bfY := (Y, \nu, S)$ be an ergodic probability measure preserving dynamical system. The Cartesian product $\bfX \otimes \bfY$ is ergodic if and only if $\s_Y^0(e(T)) = 0$. 
\end{thm}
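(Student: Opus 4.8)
The plan is to prove the two implications separately, in each case by slicing a bounded $(T\times S)$-invariant function over the $X$-coordinate and feeding the resulting ``vector field'' into the spectral theory of the Koopman operator $U_S$.

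\emph{From the spectral condition to ergodicity.} Suppose $\s_Y^0(e(T))=0$. Since ergodicity only concerns invariant sets, it suffices to show that every $F\in L^\infty(X\times Y,\mu\otimes\nu)$ with $F\circ(T\times S)=F$ is $\mu\otimes\nu$-a.e.\ constant. For $\mu$-a.e.\ $x$ the slice $F(x,\cdot)$ lies in $L^2(Y,\nu)$ (this is where $\nu$ being a probability measure is used); its mean $\int_Y F(x,y)\,d\nu(y)$ is $T$-invariant, hence $\mu$-a.e.\ equal to a constant by ergodicity of $\bfX$, so after subtracting it we may assume $F(x,\cdot)\in L^2_0(Y)$ for a.e.\ $x$. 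The invariance rewrites as $F(Tx,\cdot)=U_S^{-1}F(x,\cdot)$ in $L^2(Y)$, so $x\mapsto F(x,\cdot)$ is a measurable equivariant field whose norm $\|F(x,\cdot)\|_{L^2(Y)}$ is $T$-invariant, hence $\mu$-a.e.\ equal to some constant $r\ge 0$; the goal is $r=0$. Realising $U_S$ on $L^2_0(Y)$ as multiplication by the identity character on a direct integral $\int^{\oplus}_{\bbU}\calH_\l\,d\s_Y^0(\l)$ and writing $F(x,\cdot)\leftrightarrow(F_\l(x))_\l$, the equivariance becomes $F_\l(Tx)=\l^{-1}F_\l(x)$ for $\s_Y^0$-a.e.\ $\l$ and $\mu$-a.e.\ $x$; thus each $\|F_\l(\cdot)\|_{\calH_\l}$ is $T$-invariant and $\mu$-a.e.\ equal to a constant $c(\l)\ge 0$, with $\int_{\bbU}c(\l)^2\,d\s_Y^0(\l)=r^2$. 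For $\s_Y^0$-a.e.\ $\l$ with $c(\l)>0$, separability of $\calH_\l$ and ergodicity of $\bfX$ let me fix a vector $w$ with $h(x):=\langle F_\l(x),w\rangle$ not a.e.\ zero; then $h$ is bounded, $h\circ T=\l^{-1}h$, and $|h|$ is $T$-invariant hence a.e.\ a positive constant, so $h/|h|$ is a unit-modulus $L^\infty$-eigenfunction of $T$ with eigenvalue $\l^{-1}$. Hence $\l^{-1}\in e(T)$, and $\l\in e(T)$ as well since $e(T)$ is a group (here one uses that $\bfX$ is conservative and ergodic). Therefore $\{\l:c(\l)>0\}\subset e(T)$ up to a $\s_Y^0$-null set, so $r^2=\int_{e(T)}c(\l)^2\,d\s_Y^0(\l)=0$ by hypothesis, and $F$ is a.e.\ constant.

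\emph{From ergodicity to the spectral condition.} I argue by contraposition: assuming $\s_Y^0(e(T))>0$, I will produce a non-constant $(T\times S)$-invariant measurable function on $X\times Y$, contradicting ergodicity. The restriction $\s'$ of $\s_Y^0$ to $e(T)$ is nonzero and absolutely continuous with respect to $\s_Y^0$, so by the defining property of the restricted maximal spectral type there is $g\in L^2_0(Y)$ with $\s_g=\s'$; in the direct-integral picture $g\leftrightarrow(g_\l)_\l$ with $g_\l=0$ for $\s_Y^0$-a.e.\ $\l\notin e(T)$. Picking a measurable field $\l\mapsto f_\l\in L^\infty(X)$ of eigenfunctions with $|f_\l|\equiv 1$ and $f_\l\circ T=\l f_\l$, I would set $F(x,\cdot)\leftrightarrow(\overline{f_\l(x)}\,g_\l)_\l$. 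Then $\|F(x,\cdot)\|_{L^2(Y)}=\|g\|>0$ is constant and $F(x,\cdot)\in L^2_0(Y)$ is non-constant in $y$ for a.e.\ $x$, so $F$ is not a.e.\ constant; and $F(Tx,\cdot)\leftrightarrow(\overline{\l f_\l(x)}\,g_\l)_\l=\l^{-1}F(x,\cdot)\leftrightarrow U_S^{-1}F(x,\cdot)$, i.e.\ $F\circ(T\times S)=F$. Hence $\bfX\otimes\bfY$ carries a non-trivial invariant set and is not ergodic.

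\emph{Expected main obstacle.} The one genuinely delicate point is the measurable selection used in the second part: one must exhibit a jointly measurable family $\l\mapsto f_\l$ of unit-modulus $L^\infty$-eigenfunctions of $T$ over $e(T)$, and then a jointly measurable representative on $X\times Y$ of the field $x\mapsto F(x,\cdot)$ it produces. I expect to obtain this from a measurable-selection argument for the eigenfunction relation, exactly as carried out in Aaronson's treatment of this theorem; once it is in place, the rest is routine bookkeeping with the direct-integral decomposition and Fubini's theorem.
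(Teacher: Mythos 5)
The paper itself does not prove this statement: Theorem \ref{thm:keane} is quoted as Keane's criterion from Aaronson's book (Section 2.7), so there is no in-paper proof to compare against, and your proposal should be judged as a reconstruction of the standard argument — which it is. The overall scheme (slice a bounded invariant function over the $X$-coordinate, realise $U_S$ on $L^2_0(Y,\nu)$ as multiplication by the identity character on a direct integral governed by $\s_Y^0$, and read off the fibrewise relations $F_\l(Tx)=\l^{-1}F_\l(x)$) is exactly the classical route, and your forward direction is sound; it can even be streamlined: once you have $h$ with $|h|$ a.e.\ a positive constant and $h\circ T=\l^{-1}h$, the conjugate $\overline{h}/|h|$ is directly a unit-modulus eigenfunction for $\l$, so you do not need to invoke the group structure of $e(T)$ (though citing it, as the paper does from Aaronson Section 2.6, is also legitimate). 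The only genuine incompleteness is the one you flag yourself: in the converse direction the whole implication rests on producing a \emph{jointly} measurable field $(\l,x)\mapsto f_\l(x)$ of unit-modulus $L^\infty$-eigenfunctions over a positive-$\s_Y^0$-measure subset of $e(T)$ (together with the implicit fact that $e(T)$ is $\s_Y^0$-measurable, which uses conservativity and ergodicity of $T$). That selection lemma is not routine bookkeeping — it is the technical heart of the ``only if'' direction — so as written your proposal is a correct reduction to the cited ingredient from Aaronson's treatment rather than a self-contained proof; with that lemma imported, the rest of your argument (the identification $d\s_g=\|g_\l\|^2\,d\s_Y^0$, the equivariance check, and the non-constancy of the resulting invariant function) is correct.
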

In \cite[Theorem 1.2]{Meyerovitch}, Meyerovitch uses this criterion to prove the following
\begin{thm}[Meyerovitch] \label{thm:meyerovitch_poisson}
Let $\bfX := (X, \mu, T)$ be a conservative and ergodic dynamical system with $\mu(X) = \infty$. The product system 
$$(X, \mu, T) \otimes (X^*, \mu^*, T_*)$$ 
is ergodic if and only if $\bfX$ is ergodic.
\end{thm}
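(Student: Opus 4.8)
The plan is to follow exactly the route announced in the excerpt and reduce to Keane's criterion, Theorem~\ref{thm:keane}. The ``only if'' direction is immediate, since the first-coordinate projection is a factor map from $\bfX\otimes\bfX^*$ onto $\bfX$, so ergodicity of the product passes to $\bfX$. The content is the converse: assuming $\bfX=(X,\mu,T)$ conservative and ergodic with $\mu(X)=\infty$, show $\bfX\otimes\bfX^*$ is ergodic. First I would note that $\bfX^*$ is itself ergodic (there is no $T$-invariant set of finite positive measure: a $T$-invariant $A$ has $\mu(A)=0$ or $\mu(A^c)=0$, and in the latter case $\mu(A)=\infty$), hence weakly mixing, and it is probability preserving. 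So Theorem~\ref{thm:keane} applies with the conservative ergodic $\bfX$ and the ergodic probability system $\bfY=\bfX^*$: the product $\bfX\otimes\bfX^*$ is ergodic if and only if $\s_{X^*}^0(e(T))=0$.

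The second step is to compute the restricted maximal spectral type of $\bfX^*$. The Poisson suspension carries the chaos decomposition $L^2_0(X^*,\mu^*)=\bigoplus_{k\ge 1}\calH_k$, where $\calH_k$ is the $k$-th Wiener chaos, isometric to the symmetric subspace of $L^2(\mu^{\otimes k})$, and on which $U_{T_*}$ acts as (the symmetrization of) $U_T^{\otimes k}$. Since the maximal spectral type of $U_T^{\otimes k}$ on $L^2(\mu^{\otimes k})$ is the $k$-fold convolution $\s_X^{*k}$ of the maximal spectral type $\s_X$ of $U_T$ on $L^2(\mu)$, one gets $\s_{X^*}^0\sim\sum_{k\ge 1}\frac{1}{k!}\s_X^{*k}$. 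Therefore $\s_{X^*}^0(e(T))=0$ if and only if $\s_X^{*k}(e(T))=0$ for every $k\ge 1$, and it remains to prove this last fact.

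To that end I would first record that $\s_X$ is continuous: an $L^2(\mu)$-eigenfunction $f$ of $U_T$ has $|f|$ invariant, hence a.e.\ constant by ergodicity, which forces $f=0$ because $\mu(X)=\infty$; so $U_T$ has empty point spectrum and $\s_X$, hence every $\s_X^{*k}$, is non-atomic. In particular $\s_X^{*k}(e(T))=0$ as soon as $e(T)$ is countable. For the general case I would exploit that, for $\l\in e(T)$ with a modulus-one eigenfunction $h_\l$, the unitary $M_{h_\l}$ of $L^2(\mu)$ satisfies $U_TM_{h_\l}=\l\,M_{h_\l}U_T$, so it conjugates $U_T$ to $\bar\l U_T$; consequently the equivalence class of $\s_X$ (and of each $\s_X^{*k}$) is invariant under rotation by every element of $e(T)$. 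Since the closure of the subgroup $e(T)\le\bbU$ is either finite — in which case $e(T)$ is finite and the continuity of the $\s_X^{*k}$ finishes it — or all of $\bbU$, it remains to treat the dense case; here one uses in addition that $e(T)\ne\bbU$ (a measurable homomorphic selection $\l\mapsto h_\l$ would be continuous, hence of the form $h_\l(x)=\l^{k(x)}$ for a measurable $k\colon X\to\bbZ$ with $k\circ T=k+1$, producing a wandering set of positive measure and contradicting conservativity), so that $e(T)$ is a proper measurable subgroup of the connected group $\bbU$, hence Haar-null by Steinhaus' theorem.

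Putting these together gives $\s_{X^*}^0(e(T))=0$, and Keane's criterion yields ergodicity of $\bfX\otimes\bfX^*$. The main obstacle is the dense-eigenvalue case of the last step: upgrading the rotation invariance of the \emph{type} of $\s_X$ under a dense (possibly uncountable) subgroup to the conclusion that $\s_X$ — even when it is spectrally singular — still gives zero mass to $e(T)$; this requires analysing the Lebesgue decomposition of $\s_X$ relative to Haar measure together with the structure of $e(T)$-quasi-invariant measures on $\bbU$ (alternatively, this nullity of $e(T)$ for the maximal spectral type of a conservative ergodic infinite measure preserving transformation may simply be quoted from the literature, e.g.\ \cite{aaronson}).
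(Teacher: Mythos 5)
A preliminary remark on the comparison: the paper does not prove this statement at all --- it is quoted from \cite[Theorem 1.2]{Meyerovitch}, and the surrounding text only indicates that the proof goes through Keane's criterion (Theorem \ref{thm:keane}). Your overall route --- apply Theorem \ref{thm:keane} with $\bfY = \bfX^*$, use the chaos decomposition to identify $\s_{X^*}^0$ with $\sum_{k\geq 1}\frac{1}{k!}\s_X^{*k}$, and then show this measure gives zero mass to $e(T)$ --- is therefore a reasonable reconstruction of the intended argument, and the easy parts are done correctly: the ``only if'' direction, the ergodicity of $\bfX^*$ (no invariant set of finite positive measure), and the continuity of $\s_X$ (no $L^2$-eigenfunctions when $\mu(X)=\infty$), hence the case of countable $e(T)$.

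But there is a genuine gap exactly where the theorem is hard: you never prove $\s_X^{*k}(e(T)) = 0$ when $e(T)$ is uncountable. The Steinhaus reduction only yields that $e(T)$ is Haar-null (and even the preliminary claim $e(T)\neq\bbU$ rests on an unjustified ``measurable homomorphic selection'' of eigenfunctions $\l\mapsto h_\l$), and Haar-nullity says nothing about a spectral type that may be singular with respect to Haar measure --- a point you concede yourself. The proposed fallback of quoting the literature for ``$\s_X(e(T))=0$'' is also insufficient: what Keane's criterion requires is $\s_X^{*k}(e(T))=0$ for every $k$, and since $\s_X^{*k}(e(T)) = \int_{\bbU} \s_X^{*(k-1)}\bigl(e(T)\l^{-1}\bigr)\, d\s_X(\l)$, one needs control of $\s_X$ and its convolution powers on \emph{cosets} of $e(T)$, which does not follow formally from vanishing on $e(T)$ itself; the rotation-quasi-invariance of the type of $\s_X$ under $e(T)$ that you record is the right starting point, but turning it into the needed nullity (this is where the Aaronson--Nadkarni analysis of $L^\infty$-eigenvalue groups, cf.\ \cite{aaronson}, enters) is the actual content of Meyerovitch's theorem. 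As it stands, the proposal proves the routine reductions and leaves the core step open.
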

We will need the following generalization, which comes as a corollary. We thank the referees for suggesting this proof.
\begin{cor} \label{cor:keane_poisson}
Let $\bfX := (X, \mu, T)$ be a conservative and ergodic dynamical system with $\mu(X) = \infty$ and $k \geq 1$. The product system 
$$(X, \mu, T)^{\otimes k} \otimes (X^*, \mu^*, T_*)$$ 
is ergodic if and only if $\bfX^{\otimes k}$ is ergodic.
\end{cor}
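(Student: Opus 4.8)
The plan is to reduce both implications to Keane's ergodicity criterion (Theorem~\ref{thm:keane}), applied to the infinite-measure conservative system $\bfX^{\otimes k}$ and the ergodic probability-preserving system $\bfX^{*}=(X^{*},\mu^{*},T_{*})$, together with the identification of the group of $L^{\infty}$-eigenvalues $e(T^{\otimes k})$ with $e(T)$.

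One implication is immediate: if $(X,\mu,T)^{\otimes k}\otimes(X^{*},\mu^{*},T_{*})$ is ergodic, then $\bfX^{\otimes k}$, being a $\s$-finite factor of it (the projection onto the first $k$ coordinates), is ergodic. For the converse, assume $\bfX^{\otimes k}$ is ergodic. Since $\bfX$ is conservative and is a factor of $\bfX^{\otimes k}$, Lemma~\ref{lem:conservative_factor} shows that $\bfX^{\otimes k}$ is conservative; and since $\bfX$ is ergodic there is no $T$-invariant set of finite positive measure, so $\bfX^{*}$ is ergodic (see Section~\ref{sect:intro_poisson}), while it is of course probability preserving. Keane's criterion then says that $\bfX^{\otimes k}\otimes\bfX^{*}$ is ergodic if and only if $\s_{X^{*}}^{0}(e(T^{\otimes k}))=0$. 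On the other hand, Theorem~\ref{thm:meyerovitch_poisson} applied to $\bfX$ itself shows that $\bfX\otimes\bfX^{*}$ is ergodic, which, again by Keane's criterion (now the case $k=1$), is equivalent to $\s_{X^{*}}^{0}(e(T))=0$. So the whole statement will follow once we know that $e(T^{\otimes k})=e(T)$, since then $\s_{X^{*}}^{0}(e(T^{\otimes k}))=\s_{X^{*}}^{0}(e(T))=0$.

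It remains to prove this equality of eigenvalue groups, and this is the step I expect to be the main obstacle. The inclusion $e(T)\subseteq e(T^{\otimes k})$ is clear: an $L^{\infty}$-eigenfunction $\phi$ of $T$ for $c$ yields the $L^{\infty}$-eigenfunction $(x_{1},\dots,x_{k})\mapsto\phi(x_{1})$ of $T^{\otimes k}$ for $c$. For the reverse inclusion, take an $L^{\infty}$-eigenfunction $\Phi$ of $T^{\otimes k}$ for an eigenvalue $\lambda$; since $\bfX^{\otimes k}$ is conservative and ergodic we may normalise $|\Phi|\equiv 1$ and we have $|\lambda|=1$. Let $T_{i}$ be the transformation of $X^{k}$ that applies $T$ to the $i$-th coordinate and fixes the others; then each $T_{i}$ commutes with $T^{\otimes k}$ and $T^{\otimes k}=T_{1}\circ\dots\circ T_{k}$. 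The function $(\Phi\circ T_{i})\,\overline{\Phi}$ is $T^{\otimes k}$-invariant (the computation uses $|\lambda|=1$), hence a.e.\ equal to a constant $c_{i}\in\bbU$ by ergodicity of $\bfX^{\otimes k}$, i.e.\ $\Phi\circ T_{i}=c_{i}\Phi$. Freezing, for a.e.\ choice, all coordinates except the $i$-th and using Fubini, the resulting slice of $\Phi$ is an $L^{\infty}$-eigenfunction of $T$ for $c_{i}$, so $c_{i}\in e(T)$. Finally $\lambda=c_{1}\cdots c_{k}$ lies in $e(T)$ because $e(T)$ is a group (as $\bfX$ is conservative and ergodic). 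This proves $e(T^{\otimes k})=e(T)$, and hence the corollary.
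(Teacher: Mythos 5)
Your argument is essentially the paper's: both directions reduce to Keane's criterion (Theorem \ref{thm:keane}) plus Meyerovitch's theorem (Theorem \ref{thm:meyerovitch_poisson}), and the real content in each case is the identity $e(T^{\times k})=e(T)$, which you prove by the same mechanism as the paper --- there, one shows $e(T\times S)=e(T)e(S)$ when the product is ergodic by noting that $F\circ(T\times\Id)$ is again an eigenfunction and hence proportional to $F$; you run the identical commutation-plus-Fubini argument directly with the $k$ coordinate transformations $T_i$ and finish with the group property of $e(T)$. The one step that does not stand as written is your justification that $\bfX^{\otimes k}$ is conservative: when $\mu(X)=\infty$ the coordinate projection $X^k\to X$ sends $\mu^{\otimes k}$ to a non-$\s$-finite measure, so $\bfX$ is \emph{not} a factor of $\bfX^{\otimes k}$ in the sense required by Lemma \ref{lem:conservative_factor}, and no such soft argument can work anyway, since products of conservative transformations need not be conservative. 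The fact itself is true under your hypotheses (and is used implicitly by the paper as well when it invokes Keane's criterion): since $\bfX$ is conservative, ergodic and of infinite measure, $\mu$ has no atoms, hence neither does $\mu^{\otimes k}$; by the Hopf decomposition an ergodic system is either conservative or totally dissipative, and an ergodic totally dissipative invertible system is carried by the orbit of a single atom, so the assumed ergodicity of $\bfX^{\otimes k}$ forces conservativity. With that repair, your proof is complete and matches the paper's route.
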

\begin{proof}
Let $k \geq 1$. Assume that $\bfX^{\otimes k}$ is ergodic. We show below that 
\begin{eq} \label{eq:spectre_meyerovitch}
e(T^{\times k}) = e(T).
\end{eq}
Using Theorem \ref{thm:keane}, this implies that $(X, \mu, T)^{\otimes k} \otimes (X^*, \mu^*, T_*)$ is ergodic if and only if $(X, \mu, T) \otimes (X^*, \mu^*, T_*)$ is ergodic. Then Meyerovitch's theorem ends our proof. (Note that we need $\bfX^{\otimes k}$ to be ergodic to prove \eqref{eq:spectre_meyerovitch}).

Let us now prove \eqref{eq:spectre_meyerovitch} by proving that, if $\bfX := (X, \mu, T)$ and $\bfY := (Y, \nu, S)$ are dynamical systems such that $\bfX \otimes \bfY$ is ergodic, we have $e(T \times S) = e(T) e(S)$. Let $\l \in e(T \times S)$ and $F \in L^\infty(X \times Y, \mu \otimes \nu)$ an associated eigenfunction. We have 
$$F \circ (T \times \Id) \circ (T \times S) = F \circ (T \times S) \circ (T \times \Id) = \l F \circ (T \times \Id).$$
So $F \circ (T \times \Id)$ is also an eigenfunction associated to $\l$ and, since $\bfX \otimes \bfY$ is ergodic, there exists $c_{T \times \Id} \in \bbC$ such that $F \circ (T \times \Id) = c_{T \times \Id} F$. By Fubini's theorem, $c_{T \times \Id} \in e(T)$. Similarly, there is also $c_{\Id \times S} \in e(S)$ such that $F \circ (\Id \times S) = c_{\Id \times S} F$. Finally, note that
$$\l F = F \circ (T \times S) = F \circ (T \times \Id)\circ (\Id \times S) = c_{T \times\Id} c_{\Id \times S} F.$$
Therefore, $\l = c_{T \times\Id} c_{\Id \times S} \in e(T) e(S)$. Hence $e(T \times S) \subset e(T) e(S)$, and the converse inclusion is clear. 
\end{proof}

\subsection{Distinguishing points in a Poisson process}
\label{sect:distinguish_points}

In this section, we use the sequence $(t_n(\o))_{n \geq 1}$ introduced in Section \ref{sect:marked_point_process}. Because of that, we need $X$ to be $\bbR^+$ and the measure $\mu$ to be the Lebesgue measure. Some additional aspects of the structure of $(\bbR^+, \mu)$ will also be useful. The purpose of this section is to study the map 
$$\begin{array}{cccc}
\tilde\Phi_k: & X^k \times X^* & \to & X^*\\
& (x_1, ..., x_k, \o) & \longmapsto & \delta_{x_1} + \cdots +\delta_{x_k} + \o\\
\end{array}.$$
We view the points of $X^k \times X^*$ as a Poisson process for which the first $k$ points are distinguished, so that we can track each of them individually. To avoid any multiplicity on the right-hand term, we will study this map on a smaller set $X^{(k)} \subset X^k \times X^*$, defined as
$$X^{(k)} := \{(x_1, ..., x_k, \o) \in X^k \times X^* \, | \, x_1 < \cdots < x_k < t_1(\o)\}.$$
From now on, $\Phi_k$ denotes the restriction of $\tilde \Phi_k$ to $X^{(k)}$. We start by computing the measure of $X^{(k)}$, using the fact $t_1$ follows an exponential law of parameter $1$:
\begin{align*}
\mu^{\otimes k} \times \mu^*(X^{(k)}) & = \int_{X^*} \int_{(\bbR_+)^k} \mathbbm{1}_{x_1 < \cdots < x_k < t_1(\o)} d\mu(x_1) \cdots d\mu(x_k) d\mu^*(\o)\\
&= \int_{\bbR_+}  \int_{(\bbR_+)^k} \mathbbm{1}_{x_1 < \cdots < x_k < t} d\mu(x_1) \cdots d\mu(x_k) e^{-t} dt\\
&= \int_{\bbR_+} \frac{t^k}{k!} e^{-t} dt =1,
\end{align*}
the last equality being obtained through $k$ successive integrations by parts. We complete this with the following result
\begin{lem} \label{prop:distinguish_points}
Let $k \geq 1$. The map $\Phi_k$ sends $\res{(\mu^{\otimes k} \otimes \mu^*)}{X^{(k)}}$ onto $\mu^*$. Therefore
$$\Phi_k: (X^{(k)}, \res{(\mu^{\otimes k} \otimes \mu^*)}{X^{(k)}}) \to (X^*, \mu^*),$$
is an isomorphism of probability spaces.
\end{lem}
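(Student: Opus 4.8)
The plan is to identify $\Phi_k$ as the composition of a measure-isomorphism with a "splitting" map of the same flavor as the map $\Sigma^{(2)}$ and $\tilde\Phi_k$ already used in the paper, and then to check that the pushforward of the restricted product measure is the Poisson process $\mu^*$ by a direct computation on a generating family of events. Concretely, since $X=\bbR_+$ and $\mu$ is Lebesgue measure, the point $t_1(\o)$ of a Poisson process $\o$ is $\mathrm{Exp}(1)$-distributed, and conditionally on $t_1(\o)=t$ the remaining configuration $\o$ is a Poisson process on $[t,\infty)$ of intensity $\mu$. On the other side, under $\res{(\mu^{\otimes k}\otimes\mu^*)}{X^{(k)}}$ the tuple $(x_1,\dots,x_k,\o)$ has the following description: as computed just above the statement, the $k$-tuple $x_1<\cdots<x_k$ together with the gap to $t_1(\o)$ has density $\mathbbm{1}_{x_1<\cdots<x_k<t}\,e^{-t}$, which is exactly the joint law of the first $k+1$ order statistics obtained by reading the points of a rate-$1$ Poisson process on $\bbR_+$ from the left. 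So the natural approach is: show that $\Phi_k$ transports $\res{(\mu^{\otimes k}\otimes\mu^*)}{X^{(k)}}$ to the law of the point process $\sum_{i=1}^k\delta_{x_i}+\o$, and recognize that law as $\mu^*$.

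First I would reduce to a statement about finite-dimensional distributions. Since the measurable structure on $X^*$ is generated by the evaluation maps $\o\mapsto\o(B)$ for bounded Borel $B\subset\bbR_+$, it suffices to show that for disjoint bounded Borel sets $B_1,\dots,B_m$ the random vector $\big((\Phi_k)(x_1,\dots,x_k,\o)(B_j)\big)_{j=1}^m$ has, under $\res{(\mu^{\otimes k}\otimes\mu^*)}{X^{(k)}}$, the law of independent Poissons with parameters $\mu(B_j)$. Equivalently, using the usual Laplace-functional / void-probability characterization, it is enough to compute for nonnegative bounded measurable $h$ with bounded support
\[
\int_{X^{(k)}} \exp\!\Big(-\sum_{i=1}^k h(x_i)-\int h\,d\o\Big)\,d\big(\res{(\mu^{\otimes k}\otimes\mu^*)}{X^{(k)}}\big)
\]
and check it equals $\exp\!\big(-\int_{\bbR_+}(1-e^{-h})\,d\mu\big)$, the Laplace functional of $\mu^*$. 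Using the Mecke-type formula (or just Fubini against the known Laplace functional of $\mu^*$, conditioning on $t_1(\o)$), the left side factors as an integral over $x_1<\cdots<x_k<t$ of $e^{-h(x_1)}\cdots e^{-h(x_k)}$ times $e^{-t}$ times the Laplace functional of a Poisson process on $[t,\infty)$; carrying out the integration in $t$ and then in $x_k,\dots,x_1$ (the same successive integrations by parts that gave $\mu^{\otimes k}\otimes\mu^*(X^{(k)})=1$, now with $e^{-h}$ weights) collapses everything to $\exp\!\big(-\int_{\bbR_+}(1-e^{-h})\,d\mu\big)$. This proves $(\Phi_k)_*\res{(\mu^{\otimes k}\otimes\mu^*)}{X^{(k)}}=\mu^*$.

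It remains to upgrade this pushforward identity to an \emph{isomorphism} of probability spaces, i.e. to produce a measurable inverse defined $\mu^*$-almost everywhere. This is where one must be slightly careful, but it is not hard: on the full-measure set of $\o\in X^*$ with no multiple points and with $\o(\bbR_+)=\infty$, define $\Psi_k(\o):=\big(t_1(\o),\dots,t_k(\o),\,\o-\sum_{i=1}^k\delta_{t_i(\o)}\big)$, which lands in $X^{(k)}$ because $t_1(\o)<\cdots<t_k(\o)<t_{k+1}(\o)=t_1\big(\o-\sum_{i\le k}\delta_{t_i(\o)}\big)$; the maps $t_n$ are measurable (Section \ref{sect:marked_point_process}), so $\Psi_k$ is measurable, and $\Phi_k\circ\Psi_k=\id$, $\Psi_k\circ\Phi_k=\id$ on the relevant full-measure sets. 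Together with $(\Phi_k)_*\res{(\mu^{\otimes k}\otimes\mu^*)}{X^{(k)}}=\mu^*$ and $\mu^{\otimes k}\otimes\mu^*(X^{(k)})=1$, this gives the claimed isomorphism. The main obstacle I anticipate is purely bookkeeping: making sure the null sets (multiplicities, finiteness of $\o$, the boundary $x_k=t_1(\o)$) are handled so that $\Phi_k$ and $\Psi_k$ are genuinely mutually inverse off a set of measure zero; the Laplace-functional computation itself is routine given the exponential law of $t_1$ already invoked in the text.
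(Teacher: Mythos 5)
Your proposal is correct, but it takes a genuinely different route from the paper. The paper proves that $(\Phi_k)_*\res{(\mu^{\otimes k}\otimes\mu^*)}{X^{(k)}}=\mu^*$ by induction on $k$: the case $k=1$ is handled by showing that, under $\res{(\mu\otimes\mu^*)}{X^{(1)}}$, the variables $x$, $t_1(\o)-x$ and the interarrival gaps $(t_{i+1}(\o)-t_i(\o))_{i\ge1}$ are i.i.d.\ $\Exp(1)$ (via the memorylessness of the exponential law), and the inductive step uses the composition identity $\Phi_{k+1}=\Phi_1\circ(\Id\times\Phi_k)$. You instead do a single direct computation of the Laplace functional (equivalently, of the finite-dimensional/void distributions) of the pushforward, conditioning on $t_1(\o)$ and integrating out; this works, and when one carries it out the change of variables $u=\int_0^t e^{-h}\,d\mu$ reduces the remaining integral to $\int_0^\infty \frac{u^k}{k!}e^{-u}\,du=1$, mirroring the integrations by parts used in the text to show $\mu^{\otimes k}\otimes\mu^*(X^{(k)})=1$. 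One small imprecision in your factorization: conditionally on $t_1(\o)=t$ the measure $\o$ is $\delta_t$ plus a Poisson process on $(t,\infty)$, so the integrand carries an extra factor $e^{-h(t)}$ from the atom at $t$; this factor is exactly what makes the substitution above close up, so it must not be dropped, but including it is routine and does not affect the validity of your strategy. The identification of the explicit inverse $\o\mapsto(t_1(\o),\dots,t_k(\o),\o-\sum_{i\le k}\delta_{t_i(\o)})$ and the null-set bookkeeping are the same as in the paper. Comparatively, your Laplace-functional argument is the more standard point-process computation and generalizes immediately to any atomless $\s$-finite intensity with $\mu([0,t])\to\infty$, while the paper's induction is more elementary and exploits the renewal (i.i.d.\ exponential gap) structure specific to Lebesgue intensity on $\bbR_+$, which the surrounding sections have already set up.
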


\begin{proof}
It is clear that $\Phi_k$ is a bijection whose inverse is 
$$\o \mapsto (t_1(\o), ..., t_k(\o), \o - (\delta_{t_1(\o)} + \cdots + \delta_{t_k(\o)})).$$

We then need to prove that $\Phi_k$ is measure-preserving. We prove that result by induction on $k$. The case $k=1$ can be found in \cite[Proposition 6.1]{Meyerovitch}, but we give a proof for completeness. Denote $\Exp$ the law of an exponential variable of parameter $1$. 

To prove that $(\Phi_1)_* \res{(\mu \otimes \mu^*)}{X^{(1)}} = \mu^*$, we need to show that, if $(x, \o)$ is chosen under $\res{(\mu \otimes \mu^*)}{X^{(1)}}$, the sequence 
$$\left(x, t_1(\o) -x, \left(t_{i+1}(\o) - t_i(\o)\right)_{i \geq 1}\right)$$
 is i.i.d. of law $\Exp$. First, we know that $\left(t_{i+1} - t_i\right)_{i \geq 1}$ is i.i.d. of law $\Exp$. It is also clear that $(x, t_1 -x)$ is independent from $\left(t_{i+1} - t_i\right)_{i \geq 1}$. Therefore, we now only have to compute the law of $(x, t_1 -x)$ under $\res{(\mu \otimes \mu^*)}{X^{(1)}}$. Let $A, \, B \subset \bbR_+$ be measurable sets. We have:
\begin{align*}
\mu \otimes \mu^*(x < t_1, x \in A,& t_1 - x \in B) = \int_A \mu^*(x <t_1, t_1 - x \in B) d\mu(x)\\
&= \int_A \underset{=e^{-x}}{\underbrace{\mu^*(x<t_1)}} \underset{=\mu^*(t_1 \in B)}{\underbrace{\mu^*(t_1-x \in B \, | \, t_1 > x)}} d\mu(x)\\
&= \int_A e^{-x} d\mu(x) \mu^*(t_1 \in B) = \Exp(A) \cdot \Exp(B),
\end{align*}
where we use the fact that the law of $t_1$ is $\Exp$, and the loss of memory property of $\Exp$. 

Let $k \geq 1$, and assume that the result is true for $k$. We start by noting that $\Phi_{k+1} = \Phi_1 \circ (\Id \times \Phi_k)$ and use the induction hypothesis to prove that 
\begin{eq} \label{eq:push_X^k}
(\Id \times \Phi_k)_*\res{(\mu^{\otimes k+1} \otimes \mu^*)}{X^{(k+1)}} = \res{(\mu \otimes \mu^*)}{X^{(1)}}.
\end{eq}
Indeed, for a measurable map, $F: X^{(1)} \arr \bbR$, we have 
\begin{align*}
\int_{X^{(k+1)}}& F(x_1, \delta_{x_2} + \cdots + \delta_{k+1} + \o) \, d\mu^{\otimes k+1}(x_1, ..., x_{k+1}) d\mu^*(\o)\\
&= \int_{\bbR_+}\int_{X^{(k)}} \mathbbm{1}_{x_1 < x_2} F(x_1, \sum_{i=2}^{k+1} \delta_{x_i} + \o) \, d\mu^{\otimes k}(x_2, ..., x_{k+1}) d\mu^*(\o) d\mu(x_1)\\
&=\int_{\bbR_+}\int_{X^*} \mathbbm{1}_{x_1 < t_1(\o)} F(x_1, \o) d\mu^*(\o) d\mu(x_1)\\
&= \int_{X^{(1)}} F d\mu \otimes \mu^*,
\end{align*}
by the induction hypothesis and the fact that $x_2 = t_1(\delta_{x_2} + \cdots \delta_{k+1} + \o)$. Therefore \eqref{eq:push_X^k} is proven. We then combine it with the result for $k = 1$ to conclude that 
$$(\Phi_{k+1})_*\res{(\mu^{\otimes k+1} \otimes \mu^*)}{X^{(k+1)}} = (\Phi_1)_* \res{(\mu \otimes \mu^*)}{X^{(1)}} = \mu^*.$$
\end{proof}

We now want to study how $\Phi_k$ matches the dynamics on $X^{(k)}$ and $X^*$. We recall that we defined $\Psi: (\bbR_+)^* \arr \mathfrak{S}(\bbN)$ so that $T(t_n(\o))  = t_{\Psi(\o)(n)}(T_*\o)$. We then iterate it to define 
$$\Psi_p(\o) := \Psi(T_*^{p-1}\o) \circ \cdots \circ \Psi(\o).$$
This iteration means that $\Psi_p(\o)(n)$ is the rank of the atom $T^p(t_n(\o))$ in the counting measure $T^p_*\o$. Now consider 
$$N^{(k)}(\o) := \inf\{ p \geq 1 \, | \, \Psi_p(\o)(1) = 1, ..., \Psi_p(\o)(k) = k\}.$$ 
This is the first time in which the first $k$ points of $\o$ are back to being the first $k$ points of $T_*^p\o$ and in their original order. If the random time $N^{(k)}$ is almost surely finite, we can define the automorphism $T_*^{N^{(k)}}$ on $(X^*, \mu^*)$ by 
$$\left(T_*^{N^{(k)}}\right)(\o) := T_*^{N^{(k)}(\o)}(\o).$$
We conclude this section with the following result:
\begin{prop} \label{thm:distinguish_points}
Let $\bfX := (X, \mu, T)$ be a $\s$-finite measure preserving dynamical system. Assume that $T$ has infinite ergodic index. Then, for any $k \geq 1$, $N^{(k)}$ is $\mu^*$-almost surely well defined and $\Phi_k$ is an isomorphism between the systems 
$$(X^{(k)}, \res{(\mu^{\otimes k} \otimes \mu^*)}{X^{(k)}}, \res{(T^{\times k} \times T_*)}{X^{(k)}})$$
and 
$$(X^*, \mu^*, T_*^{N^{(k)}}).$$
We recall that $\res{(T^{\times k} \times T_*)}{X^{(k)}}$ is the induced transformation on $X^{(k)}$.
\end{prop}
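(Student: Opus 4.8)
The plan is to recognise $N^{(k)}(\o)$ as the first-return time of the point $\Phi_k^{-1}(\o)$ to $X^{(k)}$ under $T^{\times k}\times T_*$, and then to read off the proposition from conservativity of that product transformation together with the measure-space isomorphism of Lemma \ref{prop:distinguish_points}. The one structural fact I would record at the outset is the equivariance of the unrestricted map $\tilde\Phi_k$: since $T_*$ merely applies $T$ to each atom, one has $\tilde\Phi_k\circ(T^{\times k}\times T_*)=T_*\circ\tilde\Phi_k$, hence $\tilde\Phi_k\circ(T^{\times k}\times T_*)^p=T_*^p\circ\tilde\Phi_k$ for every $p\ge 1$ (both sides being defined $\mu^{\otimes k}\otimes\mu^*$-almost everywhere).

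The combinatorial heart of the argument is the following. Work on the full-measure set of $\o$ for which $T_*^p\o$ has no multiple atom for all $p\ge 1$ (automatic since $\mu$ is non-atomic), and write $y:=\Phi_k^{-1}(\o)=\bigl(t_1(\o),\dots,t_k(\o),\,\o-\sum_{i\le k}\delta_{t_i(\o)}\bigr)\in X^{(k)}$. The atoms of the last coordinate of $(T^{\times k}\times T_*)^p y$ are exactly $\{T^p(t_m(\o)):m>k\}$, so $(T^{\times k}\times T_*)^p y\in X^{(k)}$ if and only if $T^p(t_1(\o))<\cdots<T^p(t_k(\o))<T^p(t_m(\o))$ for every $m>k$; this says precisely that $T^p(t_1(\o)),\dots,T^p(t_k(\o))$ are the $k$ smallest atoms of $T_*^p\o$, in that order, i.e.\ that $\Psi_p(\o)(i)=i$ for $i=1,\dots,k$. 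Hence the return time of $y$ to $X^{(k)}$ equals $N^{(k)}(\o)$.

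It then remains to check finiteness of this return time. The coordinate projection $X^k\times X^*\to X^k$ is a factor map onto $(X^k,\mu^{\otimes k},T^{\times k})$ (the push-forward of $\mu^{\otimes k}\otimes\mu^*$ is $\mu^{\otimes k}$ because $\mu^*$ is a probability measure), and $T^{\times k}$ is conservative since $T$ has infinite ergodic index; by Lemma \ref{lem:conservative_factor}, $T^{\times k}\times T_*$ is conservative on $(X^k\times X^*,\mu^{\otimes k}\otimes\mu^*)$. As $\mu^{\otimes k}\otimes\mu^*(X^{(k)})=1>0$, conservativity forces a.e.\ point of $X^{(k)}$ to return to $X^{(k)}$, so the return time is a.e.\ finite; transporting this through $\Phi_k$ (Lemma \ref{prop:distinguish_points}) says exactly that $N^{(k)}$ is $\mu^*$-a.e.\ finite, whence both $\res{(T^{\times k}\times T_*)}{X^{(k)}}$ and $T_*^{N^{(k)}}$ are well defined. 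Combining the equivariance of $\tilde\Phi_k$ with the identification of return times then yields $\Phi_k\circ\res{(T^{\times k}\times T_*)}{X^{(k)}}=T_*^{N^{(k)}}\circ\Phi_k$ almost everywhere; since $\Phi_k$ is a measure-preserving bijection between the two full-measure spaces, it is the claimed isomorphism of dynamical systems.

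I expect the combinatorial step of the second paragraph — matching ``return to $X^{(k)}$'' with ``$\Psi_p$ fixes $1,\dots,k$'', and in particular bookkeeping the relabelling of atoms under $T_*$ cleanly — to be where most of the care goes, even though it is conceptually elementary; the remainder is just the two cited lemmas.
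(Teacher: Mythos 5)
Your proposal is correct and follows essentially the same route as the paper: establish conservativity of $(X^k \times X^*, \mu^{\otimes k}\otimes\mu^*, T^{\times k}\times T_*)$ (the paper invokes that $\mu^*$ is a probability measure, you invoke Lemma \ref{lem:conservative_factor} via the coordinate projection — both legitimate), identify the first-return time to $X^{(k)}$ with $N^{(k)}\circ\Phi_k^{-1}$ through the equivariance $\tilde\Phi_k\circ(T^{\times k}\times T_*)=T_*\circ\tilde\Phi_k$, and transport everything by the measure-space isomorphism of Lemma \ref{prop:distinguish_points}. The only difference is cosmetic: you spell out the combinatorial check that return to $X^{(k)}$ means $\Psi_p(\o)$ fixes $1,\dots,k$, which the paper leaves as ``one can check''.
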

\begin{proof}
Let $k \geq 1$. Since $\bfX = (X, \mu, T)$ is of infinite ergodic index, the system $\bfX^{\otimes k} = (X^k, \mu^{\otimes k}, T^{\times k})$ is conservative and ergodic. Since $\mu^*$ is a probability measure, one can check that the system $(X^k \times X^*, \mu^{\otimes k} \otimes \mu^*, T^{\times k} \times T_*)$ is also conservative. Therefore, the induced system 
$$(X^{(k)}, \res{(\mu^{\otimes k} \otimes \mu^*)}{X^{(k)}}, \res{(T^{\times k} \times T_*)}{X^{(k)}})$$
is well defined. Moreover, if $M^{(k)}$ is the first return time in $X^{(k)}$, then $M^{(k)}$ is $\res{(\mu^{\otimes k} \otimes \mu^*)}{X^{(k)}}$-almost surely finite. However, since we have 
\begin{eq} \label{eq:tphi_invaraince}
\tilde\Phi_k \circ (T^{\times k} \times T_*) = T_* \circ \tilde\Phi_k,
\end{eq}
one can check that on $X^{(k)}$, we have 
\begin{eq} \label{eq:return_times}
N^{(k)} = M^{(k)} \circ \Phi_k.
\end{eq}
So, because Lemma \ref{prop:distinguish_points} shows that $(\Phi_k)_*\res{(\mu^{\otimes k} \otimes \mu^*)}{X^{(k)}} = \mu^*$, we deduce that $N^{(k)}$ is $\mu^*$-almost surely finite. Finally, by combining \eqref{eq:tphi_invaraince} and \eqref{eq:return_times}, one gets
$$\Phi_k \circ \res{(T^{\times k} \times T_*)}{X^{(k)}} = T_*^{N^{(k)}} \circ \Phi_k.$$ 
Since, by Lemma \ref{prop:distinguish_points}, $\Phi_k$ is a bijection for which $(\Phi_k)_*\res{(\mu^{\otimes k} \otimes \mu^*)}{X^{(k)}} = \mu^*$, we have shown that it yields the desired isomorphism of dynamical systems. 
\end{proof}

\subsection{Relative unique ergodicity}
\label{sect:thm_conf_T_phi}

As before, we assume that $X  =\bbR^+$ and $\mu$ is the Lebesgue measure. The main step in proving Theorem \ref{thm:conf_T_phi} is the following relative unique ergodicity result:
\begin{thm} \label{thm:Poisson_furstenberg}
Let $\bfX := (X, \mu, T)$ be a dynamical system of infinite ergodic index. If the compact extension $(X \times G, \mu \otimes m_G, T_\phi)$ is also of infinite ergodic index, then the  only $(T_\phi)_*$-invariant measure $\rho \in \P((X \times G)^*)$ such that $(\pi_*)_*\rho = \mu^*$ is $\rho = (\mu \otimes m_G)^*$.
\end{thm}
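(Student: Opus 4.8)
The plan is to reduce the statement, for every $k \geq 1$, to the relative unique ergodicity of a compact group extension built out of the $k$-fold product $\bfZ^{\otimes k}$ of the compact extension $\bfZ := (X \times G, \mu \otimes m_G, T_\phi)$ and the Poisson suspension $(X^*, \mu^*, T_*)$, and then to apply Furstenberg's lemma (Lemma \ref{lem:furstenberg}).

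First I would pass to the marked-point-process picture of Section \ref{sect:perm_group} with $K = G$: it identifies $(X \times G)^*$ with $X^* \times G^\bbN$ and conjugates $(T_\phi)_*$ to the skew product $\calT$ over $(X^*, \mu^*, T_*)$ given by $\calT(\o, (h_n)_n) = \bigl(T_*\o,\, (h_{\Psi(\o)^{-1}(m)}\, \phi(t_{\Psi(\o)^{-1}(m)}(\o)))_m\bigr)$, where $\Psi$ is the permutation cocycle of Section \ref{sect:perm_group}. A measure $\rho$ as in the statement then corresponds to a $\calT$-invariant probability measure $\hat\rho$ on $X^* \times G^\bbN$ with first marginal $\mu^*$, describing a (possibly correlated) random marking $(h_n)_n$ of the points of a Poisson process $\o$ of intensity $\mu$. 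It is enough to show that, under $\hat\rho$, the sequence $(h_n)_n$ is i.i.d.\ of law $m_G$ and independent of $\o$: by Proposition \ref{prop:marked_point_process} this gives $\hat\rho = \mu^* \otimes m_G^{\otimes \bbN}$, i.e.\ $\rho = (\mu \otimes m_G)^*$.

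Next I would fix $k \geq 1$ and distinguish the $k$ smallest points, as in Section \ref{sect:distinguish_points}. Since $\calT$ is a skew product over $(X^*, \mu^*, T_*)$ with bijective fibre maps and $\mu^*$ is $T_*^{N^{(k)}}$-invariant (Proposition \ref{thm:distinguish_points}), disintegrating $\hat\rho$ over $\mu^*$ shows that $\hat\rho$ is also invariant under the accelerated map $(\o, (h_n)) \mapsto \calT^{N^{(k)}(\o)}(\o, (h_n))$. I would then conjugate by $\Phi_k$ and keep only the first $k$ marks: after $N^{(k)}(\o)$ steps the $j$-th distinguished point $x_j$ has returned to rank $j$ with its mark multiplied by the cocycle value $\phi^{(M^{(k)})}(x_j) := \phi(x_j)\phi(Tx_j)\cdots\phi(T^{M^{(k)}-1}x_j)$, where $M^{(k)}$ is the first return time of $T^{\times k} \times T_*$ to $X^{(k)}$; hence this accelerated map becomes, on $X^{(k)} \times G^k$, the transformation induced by $\bfZ^{\otimes k} \otimes (X^*, \mu^*, T_*)$ on that set, which is a compact $G^k$-extension of the transformation induced by $\bfX^{\otimes k} \otimes (X^*, \mu^*, T_*)$ on $X^{(k)}$. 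Consequently the image $\rho_k$ of $\hat\rho$ under this operation is an invariant probability measure of that compact $G^k$-extension, and Lemma \ref{prop:distinguish_points} shows its projection to $X^{(k)}$ is $\res{(\mu^{\otimes k} \otimes \mu^*)}{X^{(k)}}$.

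Finally I would supply the ergodicity inputs needed for Furstenberg's lemma. The base $\bigl(X^{(k)}, \res{(\mu^{\otimes k} \otimes \mu^*)}{X^{(k)}}, \res{(T^{\times k}\times T_*)}{X^{(k)}}\bigr)$ is ergodic: by Corollary \ref{cor:keane_poisson} the system $\bfX^{\otimes k} \otimes (X^*, \mu^*, T_*)$ is ergodic (using that $\bfX$ has infinite ergodic index), it is conservative, so the system it induces on the finite-measure set $X^{(k)}$ is ergodic. The compact $G^k$-extension is ergodic as well: $\bfZ^{\otimes 2k}$ is ergodic (infinite ergodic index of $\bfZ$), so the tensor-function argument (a non-constant $L^\infty$-eigenfunction $f$ of $T_\phi^{\times k}$ would make $f \otimes \bar f$ a non-constant $T_\phi^{\times 2k}$-invariant function) gives $e(T_\phi^{\times k}) = \{1\}$; since $(X^*, \mu^*, T_*)$ is ergodic, hence weakly mixing, $\s^0_{X^*}(\{1\}) = 0$, so Keane's criterion (Theorem \ref{thm:keane}) yields that $\bfZ^{\otimes k} \otimes (X^*, \mu^*, T_*)$ is ergodic and conservative, and thus so is the system it induces on $X^{(k)} \times G^k$. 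Lemma \ref{lem:furstenberg} then forces $\rho_k = \res{(\mu^{\otimes k} \otimes \mu^*)}{X^{(k)}} \otimes m_G^{\otimes k}$, i.e.\ $(h_1, \dots, h_k)$ has law $m_G^{\otimes k}$ and is independent of $\o$; letting $k \to \infty$ finishes the proof. I expect the main obstacle to be the careful bookkeeping of the third step: verifying that distinguishing $k$ points and accelerating by $N^{(k)}$ really turns the $\calT$-invariance of $\hat\rho$ into an invariant measure of \emph{exactly} that compact $G^k$-extension — that is, that the permutation cocycle $\Psi$ and the group cocycle $\phi$ interact so that, on the first $k$ marks, everything collapses to the single factor $\phi^{(M^{(k)})}(x_j)$. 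Everything else is routine, given the quoted results.
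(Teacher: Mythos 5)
Your proposal is correct and follows essentially the same route as the paper's proof: the marked-point-process representation, distinguishing the first $k$ points and accelerating by $N^{(k)}$ to obtain an invariant measure of a compact $G^k$-extension of the induced system on $X^{(k)}$, Furstenberg's lemma (Lemma \ref{lem:furstenberg}), and then $k \to \infty$. The only (harmless) deviation is the ergodicity sub-step, where you apply Keane's criterion (Theorem \ref{thm:keane}) directly to $\bfZ^{\otimes k} \otimes (X^*, \mu^*, T_*)$ via $e(T_\phi^{\times k}) = \{1\}$, whereas the paper invokes Corollary \ref{cor:keane_poisson} for $\bfZ$ and passes to the factor $\bfZ^{\otimes k} \otimes (X^*, \mu^*, T_*)$; both arguments are valid.
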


As done in Section \ref{sect:perm_group}, we represent the Poisson extension $((X \times G)^*, (\mu \otimes m_G)^*, (T_\phi)_*) \overset{\pi_*}{\to} (X^*, \mu^*, T_*)$ through a Rokhlin cocycle. We do this using the representation of $((X \times G)^*, (\mu \otimes m_G)^*)$ as a marked point process given in Proposition \ref{prop:marked_point_process}. 

We start by introducing the skew product group $G^\bbN \rtimes \mathfrak{S}(\bbN)$ whose law we define by
$$((h_n)_{n \geq 1}, \tau) \cdot ((g_n)_{n \geq 1}, \s) = ((h_{\s(n)} \cdot g_n)_{n \geq 1}, \tau \circ \s).$$
This group acts on $(G^\bbN, {m_G}^{\otimes \bbN})$ via the maps
$$\chi_{(g_n)_{n \geq 1}, \s}\left((h_n)_{n \geq 1}\right) := \left(g_{\s^{-1}(n)} \cdot h_{\s^{-1}(n)}\right)_{n \geq 1}.$$
Then we define the cocycle from $X^*$ to $G^\bbN \rtimes \mathfrak{S}(\bbN)$ by
$$\overline{\phi}: \o \mapsto (\phi(t_n(\o))_{n \geq 1}, \Psi(\o)).$$
This cocycle induces the following transformation 
$$\begin{array}{cccc}
(T_*)_{\overline{\phi}}: & X^* \times G^\bbN & \to & X^* \times G^\bbN\\
& (\o, (g_n)_{n \geq 1}) & \longmapsto & (T_*\o, \chi_{\overline{\phi}(\o)}((g_n)_{n \geq 1}))
\end{array}.$$
Then, for any $(T_\phi)_*$-invariant measure $\rho$ such that $(\pi_*)_*\rho = \mu^*$, an adaptation of the computation \eqref{eq:equivariance} shows that the map $\Phi$ introduced in \eqref{eq:iso_marked_point_process} gives an isomorphism between the extensions 
$$((X \times G)^*, \rho, (T_\phi)_*) \overset{\pi_*}{\to} (X^*, \mu^*, T_*),$$
and
$$(X^* \times G^\bbN, \Phi_*\rho, (T_*)_{\overline{\phi}}) \to (X^*, \mu^*, T_*).$$

Therefore, to prove Theorem \ref{thm:Poisson_furstenberg}, we need to take a $(T_*)_{\overline{\phi}}$-invariant measure $\l$ such that $\l(\cdot \times G^\bbN) = \mu^*$ and show that $\l = \mu^* \otimes {m_G}^{\otimes \bbN}$. This is what we do bellow:
\begin{proof}[Proof of Theorem \ref{thm:Poisson_furstenberg}]
Let $\l$ be a $(T_*)_{\overline{\phi}}$-invariant measure on $X^* \times G^\bbN$ such that $\l(\cdot \times G^\bbN) = \mu^*$. Fix $k \geq 1$ and set $\l_k$ as the image of $\l$ via $p_k$, the projection on $X^* \times G^k$. The main idea of this proof is to use Proposition \ref{thm:distinguish_points} to distinguish the points $t_1(\o), ..., t_k(\o)$ since they determine the action of $(T_*)_{\overline{\phi}}$ on $g_1, ..., g_k$ and to then view $(t_1(\o), g_1), ..., (t_k(\o), g_k)$ as a compact extension of $t_1(\o), ..., t_k(\o)$ to which Furstenberg's relative unique ergodicity Lemma applies. 

We start our argument by understanding better the dynamics on $g_1, ..., g_k$. Since $(X, \mu, T)$ has infinite ergodic index, Proposition \ref{thm:distinguish_points} tells us that the random time
$$\tilde N^{(k)} (\o, (g_n)_{n \geq 1}) := N^{(k)}(\o)$$
is $\l$-almost-surely finite. Now note that, by definition of $N^{(k)}$, we have
\begin{align*}
p_k \circ (T_*)_{\overline{\phi}}^{\tilde N^{(k)}} (\o, &(g_n)_{n \geq 1}) \\
&= (T_*^{N^{(k)}(\o)}\o, \phi^{(N^{(k)}(\o))}(t_1(\o)) \cdot g_1, ..., \phi^{(N^{(k)}(\o))}(t_k(\o)) \cdot g_k)\\
&= (T_*^{N^{(k)}(\o)}\o, \phi_k(\o) \cdot (g_1, ..., g_k)),
\end{align*}
where we define the cocycle $\phi_k: X^* \arr G^k$ by:
$$\phi_k(\o) := \phi^{(N^{(k)}(\o))}(t_1(\o)), ..., \phi^{(N^{(k)}(\o))}(t_k(\o)),$$
with 
$$\phi^{(p)}(x) := \phi(T^{p-1}x) \cdots \phi(x).$$
Therefore $\l_k$ is invariant under the transformation 
$$\begin{array}{cccc}
(T_*^{N^{(k)}})_{\phi_k}: & X^* \times G^k & \to & X^* \times G^k\\
& (\o, (g_1, ..., g_k)) & \longmapsto & ((T_*)^{N^{(k)}(\o)}\o, \phi_k(\o) \cdot (g_1, ..., g_k))
\end{array}.$$
This map yields a compact extension of $(T_*)^{N^{(k)}}$, but to apply Furstenberg's Lemma (i.e. Lemma \ref{lem:furstenberg}), we still have to prove that 
\begin{eq} \label{eq:final_system}
(X^* \times G^k, \mu^* \otimes m_G^{\otimes k}, (T_*^{N^{(k)}})_{\phi_k})
\end{eq}
is ergodic. 

We recall that $M^{(k)}$ is defined as the return time on $X^{(k)}$ and that $M^{(k)} = N^{(k)} \circ \Phi_k$. Then, Proposition \ref{thm:distinguish_points} tells us that $(X^* \times G^k, \mu^* \otimes m_G^{\otimes k}, (T_*^{N^{(k)}})_{\phi_k})$ is isomorphic to 
\begin{eq} \label{eq:points_distinguished_with_group}
\left(X^{(k)} \times G^k, \res{\left(\mu^{\otimes k} \otimes \mu^*\right)}{X^{(k)}} \otimes m_G^{\otimes k}, \left(\res{(T^{\times k} \times T_*)}{X^{(k)}}\right)_{\widehat{\phi_k}}\right),
\end{eq}
where $\widehat{\phi_k}$ is the cocycle defined by 
$$\widehat{\phi_k} := \phi_k \circ \Phi_k = (\phi^{(M^{(k)})}(x_1), ..., \phi^{(M^{(k)})}(x_k)).$$
However, it is straightforward to check that, up to a permutation of coordinates, \eqref{eq:points_distinguished_with_group} is an induced system of
$$(X^k \times G^k \times X^*, \mu^{\otimes k} \otimes m_G^{\otimes k} \otimes \mu^*, T_\phi^{\times k} \times T_*),$$
which can be written as 
\begin{eq} \label{eq:final_system_infty}
(X \times G, \mu \otimes m_G, T_\phi)^{\otimes k} \otimes (X^*, \mu^*, T_*).
\end{eq}
However, this is a factor of
\begin{eq} \label{eq:final_system_infty_big}
(X \times G, \mu \otimes m_G, T_\phi)^{\otimes k} \otimes ((X \times G)^*, (\mu \otimes m_G)^*, (T_\phi)_*).
\end{eq}
But, since $(X \times G, \mu \otimes m_G, T_\phi)$ is of infinite ergodic index, Corollary \ref{cor:keane_poisson} applies and tells us that \eqref{eq:final_system_infty_big} is ergodic, and therefore \eqref{eq:final_system_infty} is as well. Since an induced system on an ergodic system is also ergodic, this yields that \eqref{eq:final_system} is ergodic. In conclusion, Furstenberg's Lemma implies that $\l_k = \mu^* \otimes m_G^{\otimes k}$.

This being true for every $k \geq 1$, it follows that $\l = \mu^* \otimes m_G^{\otimes \bbN}$. 
\end{proof}

\subsection{Conclusion of the proof of Theorem \ref{thm:conf_T_phi}}
\label{sect:final_proof_conf_Tphi}

We now finish the proof of Theorem \ref{thm:conf_T_phi} by combining our relative unique ergodicity result (Theorem \ref{thm:Poisson_furstenberg}) from the previous section with Theorem \ref{thm:poisson_splitting}. In our application of the splitting result (Theorem \ref{thm:poisson_splitting}), the fact that marginals $\{\nu_i\}_{i \in \llbracket 1, n \rrbracket}$ are Poisson measures will already be known, and the important part will be the fact that the associated joining $\l$ is the product joining.

\begin{proof}[Proof of Theorem \ref{thm:conf_T_phi}]
Let $\bfX := (X, \mu, T)$ be a dynamical system, and $\phi: X \arr G$ a cocycle such that the compact extension $\bfZ := (X \times G, \mu \otimes m_G, T_\phi)$ has infinite ergodic index. 

Let $\l$ be a $(T_\phi)_* \times (T_\phi)_*$-invariant self-joining of $(\mu \otimes m_G)^*$ such that 
\begin{eq} \label{eq:hyp_confined}
(\pi_* \times \pi_*)_*\l = \mu^* \otimes \mu^*.
\end{eq}
Since, as mentioned in Section \ref{sect:intro_poisson}, $(X^*, \mu^*, T_*)$ is weakly mixing, up to replacing $\l$ with one of its ergodic components, we may assume that the system 
$$((X \times G)^* \times (X \times G)^*, \l, (T_\phi)_* \times (T_\phi)_*),$$
is ergodic. Now set $\rho := \Sigma_*\l$. We then use \eqref{eq:hyp_confined} to compute
\begin{align*}
(\pi_*)_*\rho = (\pi_*)_*\Sigma_*\l = \Sigma_*(\pi_* \times \pi_*)_* \l = \Sigma_*(\mu^* \otimes \mu^*) = (2\mu)^*.
\end{align*}
In other words, \eqref{eq:hyp_confined} means that the projection of $\rho$ on $X^*$ is the sum of two independent Poisson point processes of intensity $\mu$, and the result of this sum is a Poisson point process of intensity $2\mu$. Now, since $T_\phi$ has infinite ergodic index, we can apply Theorem \ref{thm:Poisson_furstenberg} to conclude that $\rho = (2\mu \otimes m_G)^*$.

Using again the fact that $T_\phi$ has infinite ergodic index, we can now deduce from Theorem \ref{thm:poisson_splitting} that $\l$ is the product joining
$$\l = (\mu \otimes G)^* \times (\mu \otimes G)^*.$$
\end{proof}

\section{Compact extensions of infinite ergodic index}
\label{sect:ext_infinte_erg_index}

The construction in Theorem \ref{thm:conf_T_phi} relies on a compact extension
$$(X \times G, \mu \otimes m_G, T_\phi),$$
which is of infinite ergodic index. In this section, we present various situations in which such extensions arise. 

\subsection{Abstract constructions}

One way to obtain an example of a compact extension of infinite ergodic index is to start with a dynamical system $\bfZ$ with infinite ergodic index and find a factor $\bfX$ of that system over which $\bfZ$ is a compact extension. The key element for such an approach to work is the following:
\begin{lem}\label{lem:compact_factor}
Let $(Z, \rho, R)$ be an ergodic dynamical system and take a \emph{compact} sub-group $K \subset \mathrm{Aut}(Z, \rho, R)$. Define
$$\B_K := \{A \in \mathcal{B}(Z) \, | \, \forall S \in K, \, R^{-1}A = A \text{ mod } \rho\}.$$
Then $\B_K$ is $\s$-finite and there exist a dynamical system $(X, \mu, T)$ and a factor map $\pi: Z \to X$ such that $\B_K = \s(\pi)$ and $(Z, \rho, R)$ is isomorphic to a compact extension of $(X, \mu, T)$. 
\end{lem}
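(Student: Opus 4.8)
The plan is to realise $\B_K$ as the $\s$-algebra of a genuine factor $(X,\mu,T)$ of $\bfZ$, and then to recognise $(Z,\rho,R)$ itself as a compact group extension of that factor. Since $K$ is a compact subgroup of the Polish group $\aut(Z,\rho)$ it is compact metrizable, and (by the classical point-realisation theorem for group actions, after discarding a null set) we may assume that the action $K\times Z\to Z$ is jointly measurable and measure preserving. As $K\subseteq\aut(Z,\rho,R)$, every $S\in K$ commutes with $R$; hence if $A$ is $K$-invariant so is $R^{-1}A$, i.e. $\B_K$ is $R$-invariant.

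The first real point is the $\s$-finiteness of $\B_K$, which I would get by averaging over $K$. Fix a measurable $h\colon Z\to(0,\infty)$ with $\int_Z h\,d\rho=1$ (available since $\rho$ is $\s$-finite) and set $\psi(z):=\int_K h(Sz)\,dm_K(S)$. Invariance of the Haar measure $m_K$ makes $\psi$ a $K$-invariant, hence $\B_K$-measurable, function; Fubini together with the fact that each $S$ preserves $\rho$ gives $\int_Z\psi\,d\rho=1$; and if $N:=\{\psi=0\}$ then $\int_K\big(\int_{SN}h\,d\rho\big)dm_K(S)=\int_N\psi\,d\rho=0$, so $\int_{SN}h\,d\rho=0$ for some $S$, whence $\rho(N)=\rho(SN)=0$ because $h>0$ and $S$ is measure preserving. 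Thus $\psi\in L^1(\B_K,\rho)$ is positive a.e., so $Z=\bigcup_n\{\psi>1/n\}$ mod $\rho$ shows $\B_K$ is $\s$-finite. Being also countably generated mod $\rho$ (separability of $L^2$), $\B_K$ is the $\s$-algebra of a factor map $\pi\colon Z\to(X,\mu,T)$ onto a standard $\s$-finite system, with $\mu:=\pi_*\rho$ and $\pi R=T\pi$; this already gives the $\s$-finiteness clause and the factor $\pi$.

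Next I would analyse the fibres. Each $S\in K$ fixes $\pi$ (it fixes every set of $\B_K=\s(\pi)$, so $\pi S=\pi$), hence acts on $\mu$-a.e.\ fibre $(Z_x,\rho_x)$ of the disintegration $\rho=\int_X\rho_x\,d\mu(x)$, preserving $\rho_x$, and acting ergodically on a.e.\ fibre because $\B_K=\s(\pi)$ (relative ergodicity of $K$ over $\pi$). Now comes the conceptual crux: consider the closed-subgroup-valued map $z\mapsto K_z:=\{S\in K:Sz=z\}$; since $R$ commutes with $K$ and is invertible one checks $K_{Rz}=K_z$ a.e., so by ergodicity of $(Z,\rho,R)$ this map is a.e.\ a constant subgroup $H\subseteq K$. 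If $K_0:=\{S\in K:S=\id\text{ mod }\rho\}$ is the kernel of the action, then every $S\in H$ fixes a.e.\ point, i.e. $H\subseteq K_0$; since trivially $K_0\subseteq K_z$, we conclude $K_z=K_0$ for a.e.\ $z$. Hence the compact metrizable group $G:=K/K_0$ acts \emph{essentially freely} on $(Z,\rho)$, still commuting with $R$ and with the same invariant $\s$-algebra $\s(\pi)$.

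Finally I would assemble the isomorphism. A free Borel action of a compact metrizable group is smooth, so it admits a Borel transversal; combined with relative ergodicity (a.e.\ fibre is a single free $G$-orbit mod $\rho_x$, so $\rho_x$ is Haar once $Z_x$ is identified with $G$) this produces a measurable section $s\colon X\to Z$ and a measure isomorphism $\Theta\colon(X\times G,\mu\otimes m_G)\to(Z,\rho)$, $\Theta(x,g)=g\cdot s(x)$, which intertwines the $G$-translations and satisfies $\pi\circ\Theta=\mathrm{pr}_X$; in particular $\Theta$ carries $\B_K$ onto $\s(\mathrm{pr}_X)$. Since $R$ commutes with the $G$-action and preserves the fibration, writing $R s(x)=\phi(x)\cdot s(Tx)$ defines a measurable cocycle $\phi\colon X\to G$ with $\Theta^{-1}R\Theta(x,g)=(Tx,g\cdot\phi(x))=T_\phi(x,g)$, so $(Z,\rho,R)\cong(X\times G,\mu\otimes m_G,T_\phi)$ is a compact extension of $(X,\mu,T)$, as claimed. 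The main obstacle is Step~3 — that ergodicity of $R$ forces the compact action to be essentially free, which is precisely what turns the extension into a \emph{group} extension rather than a homogeneous-space extension; the rest is careful invocation of standard black boxes (jointly measurable realisation of the compact action, measurable cross-sections for smooth equivalence relations, the structure of ergodic compact group actions).
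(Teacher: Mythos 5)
Your argument is essentially correct, but it follows a genuinely different route from the paper: the paper does not prove the lemma directly, it invokes Glasner's structure theorem for compact subgroups of the centralizer in the probability-preserving case and then passes to the $\s$-finite infinite-measure case via induced transformations on a finite-measure set of $\B_K$, whereas you give a self-contained proof working directly in the $\s$-finite setting. Your Haar-averaging construction of the positive $\B_K$-measurable integrable function $\psi$ is a clean direct proof of $\s$-finiteness, avoiding the inducing step altogether; and your ``conceptual crux'' --- that the stabilizer map $z\mapsto K_z$ is $R$-invariant, hence a.e.\ equal to a single closed subgroup $H$ by ergodicity of $R$, hence $H\subseteq K_0$, so that $G=K/K_0$ acts essentially freely and the fibres are free $G$-orbits carrying Haar measure --- is precisely the content of the cited theorem (this is what upgrades the a priori homogeneous-space fibres $K/K_z$ to a genuine group extension), so in effect you re-prove the black box the paper leans on. What the paper's route buys is brevity and the outsourcing of the measure-theoretic technicalities; what your route buys is a uniform treatment of finite and infinite measure and an explicit identification of the group as $K/K_0$ and of the cocycle via a section. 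The places where your write-up relies on standard but nontrivial care are exactly the ones you flag: you need a point realisation in which the $K$-action is continuous (or at least jointly measurable with a conull invariant set on which $R$ commutes with \emph{all} $S\in K$ simultaneously) so that the stabilizer map is Borel and genuinely $R$-invariant, and so that $K_0\subseteq K_z$ a.e.\ can be deduced from a countable dense subset of $K_0$; similarly the $G$-invariance of a.e.\ conditional measure $\rho_x$ requires the same Fubini-plus-continuity upgrade from ``for each $S$, a.e.\ $x$'' to ``a.e.\ $x$, for all $S$''. With those standard realisation results granted, the proof is complete and correct.
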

A proof of this result for probability preserving systems can be found in \cite[Theorem 3.29]{glasner}. One can pass to the infinite measure case by using induced transformations with arguments similar to \cite[\textsection 4]{conditionnal_entropy}. A result similar to Lemma \ref{lem:compact_factor} in the case of non-singular transformations can also be found in \cite[Proposition 2.5]{quotients_nonsingular}.

\begin{exe} We give two applications of Lemma \ref{lem:compact_factor} that yield compact extensions of infinite ergodic index:
\begin{enumerate} [label = (\roman*)]
\item Start with any system $\bfZ := (Z, \rho, R)$ of infinite ergodic index. In that case, the product system $\bfZ \otimes \bfZ := (Z \times Z, \rho \otimes \rho, R \times R)$ is also of infinite ergodic index. On $\bfZ \otimes \bfZ$, we take the symmetric factor, i.e. the factor of sets invariant under the action $S: (z_1, z_2) \mapsto (z_2, z_1)$. Since it is an involution, the group generated by $S$ is $K := \{\mathrm{Id}, S\}$, which is finite and therefore compact. It is also clear that $S$ commutes with $R \times R$, so $K$ is a sub-group of $\mathrm{Aut}(Z \times Z, \rho \otimes \rho, R \times R)$. 
\item We can also use a construction due to Danilenko \cite[Theorem 0.1 (1)]{danilenko}: it is shown that for any countable group $G$, there exists an infinite measure preserving free $G$-action $\{T_g\}_{g \in G}$ on a space $(Z, \rho)$ such that for every $g \in G$ of infinite order, the system $(Z, \rho, T_g)$ has infinite ergodic index. Take a finite group $K$ and apply that result to the group $G := \bbZ \times K$. The resulting action $\{T_{(n, k)}\}_{n \in \bbZ, k \in K}$ gives us our desired system $(Z, \rho, R)$ by taking $R := T_{(1, e_K)}$. It is clear that the group $\t K := \{T_{(0, k)}\}_{k \in K}$ is a sub-group of $\mathrm{Aut}(Z, \rho, R)$ (because the elements of $\t K$ commute with $R$). We thank the referees of the paper for suggesting this example.
\end{enumerate}
\end{exe}

\subsection{An explicit family of examples}

In \cite{silva}, for any finite abelian group $G$, the authors build an explicit family of finite rank transformation with a structure on the columns that yield a clear action of $G$. In that way, their construction yields a explicit compact extension of a rank one transformation.

We present here another description of those extensions, where the cocycle appears explicitly. The work in \cite{silva} deals with extensions of any rank one transformation. However, for simplicity,
 we choose a specific rank one transformation: the infinite Chacon transformation. It is known that it has an infinite ergodic index (see \cite[Theorem 2.2]{def_chacon_infty}). %Therefore, we only present particular cases of the work of \cite{silva}.

\subsubsection{Description of the infinite Chacon transformation}
\label{sect:infty_chacon}

As any rank one transformation, the infinite Chacon transformation can be defined as an increasing union of towers $(\calT_n)_{n \geq 1}$. The tower $\calT_n$ of order $n$, is composed of its levels $\{E_n^{(1)}, ..., E_n^{(h_n)}\}$ such that
$$\calT_n = \bigsqcup_{k=1}^{h_n} E_n^{(k)}.$$
We say that $h_n$ is the height of $\calT_n$. The transformation $T$ acts on $\calT_n$ such that, for $k \in \llbracket 1, h_n -1\rrbracket$, we have
$$T E_n^{(k)} = E_n^{(k+1)}.$$
All levels of $\calT_n$ have same measure under $\mu$, and we denote it by $\mu_n := \mu(E_n^{(k)})$. 

\begin{figure}[h]
\centerline{\includegraphics{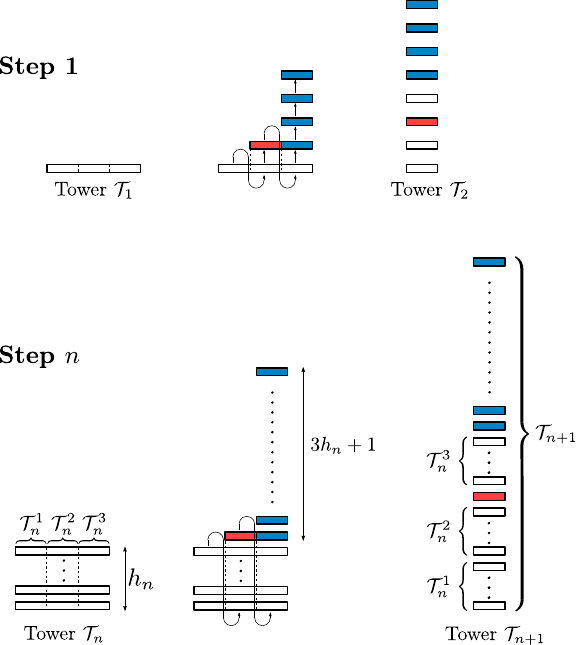}}
\caption{Construction of the infinite Chacon transformation}
\end{figure}

The construction of the sequence $(\calT_n)_{n \geq 1}$ is done inductively. It will be done by taking intervals of $\bbR^+$ with the Lebesgue measure to be the levels of our towers.  Start by taking the interval $[0, 1[$ to be $\calT_1$. Now assume that the tower $\calT_n$ has been built. The construction of $\calT_{n+1}$ goes as follows.

Decompose $\calT_n$ into three disjoint towers of equal measure $\calT_n = \calT_n^1 \sqcup \calT_n^2 \sqcup \calT_n^3$. Specifically, split each level of $\calT_n$ into three intervals of length $\mu_n/3$, then put the left-most interval in $\calT_n^1$, the middle one in $\calT_n^2$ and the right one in $\calT_n^3$. We will call \emph{spacers} a collection of $3h_n+2$ intervals of length $\mu_n/3$, disjoint from $\calT_n$. We put a spacer on top of $\calT_n^2$ and $3h_n +1$ spacers on top of $\calT_n^3$. Once the spacers are in place, we stack $\calT_n^1$, $\calT_n^2$ and $\calT_n^3$ on top of each other, which yields $\calT_{n+1}$. Therefore $\calT_{n+1}$ is a tower of height $2(3h_n+1)$ whose levels each have measure $\mu_n/3$, so $\mu(\calT_{n+1}) \geq 2 \mu(\calT_n)$. Finally, for $k \in \llbracket 1, h_{n+1}-1 \rrbracket$, define $T$ on $E_{n+1}^{(k)}$ as the translation that sends $E_{n+1}^{(k)}$ to $E_{n+1}^{(k+1)}$ (which is possible because they are both intervals of the same length). The transformation is not yet defined on $E_n^{(h_{n+1})}$, that will be done in the next step of the construction.

We end the construction of $(X, \mu, T)$ by setting $X := \bigcup_{n \geq 1} \calT_n$. Since $\mu(\calT_{n+1}) \geq 2 \mu(\calT_n)$, we have $\mu(X) = \infty$.

\subsubsection{Construction of the extension}
\label{sect:extension_infty_erg_index}

The extensions are obtained as follows: let $G$ be a finite abelian group (for which we use additive notation) and $\phi: X \arr G$ be a cocycle constant on the levels of each tower $\calT_n$. To get such a cocycle, one needs to know the value it takes on $\calT_1$, and at each step of the construction of the system $(X, \mu, T)$, when building $\calT_{n+1}$ from $\calT_n$, it suffices to specify the value that the cocycle $\phi$ takes on each spacer that is added. Let $g_{n, 1}$ be the value of $\phi$ on the spacer on top of $\calT_n^2$ and $g_{n, 2}$ be the sum of the values taken by $\phi$ on all of the spacers on top of $\calT_n^3$. Set $f_n$ be the sum of the values taken by $\phi$ on all of the levels of $\calT_n$. We then have $f_{n+1} = 3 f_n + g_{n, 1} + g_{n, 2}$. 

For such a cocycle, the associated compact extension $(X \times G, \mu \otimes m_G, T_\phi)$ is a rank $\#G$ transformation isomorphic to a transformation from \cite{silva}, chosen with the right parameters. Following \cite[Theorem 2.2]{silva}, we know that $(X \times G, \mu \otimes m_G, T_\phi)$ has infinite ergodic index if the following are satisfied:
\begin{enumerate} [label = (\roman*)]
\item The set $\bigcup_{n \geq 1} \{0, f_n, 2 f_n + g_{n, 1}\} \subset G$ generates $G$,
\item For every $n \geq 1$, the couple $(1, 0_G) \in \bbZ \times G$ is in the integer span of 
\begin{align*}
&\{(h_n, f_n), (h_n + 1, f_n + g_{n, 1})\} \cup \\
& \bigcup_{M \geq 1} \{(3h_M + 1, g_{M, 2}), (3h_M + 2, g_{M+1, 1} + g_{M, 2})\},
\end{align*}
in $\bbZ \times G$. 
\end{enumerate}
The condition (i) guaranties the ergodicity of the system, and the second condition gives the infinite ergodic index.

For example, if we take an integer $N$ and denote $A_N$ a spacer used in the construction of $\calT_{N+1}$, then the cocycle $\phi := \mathbbm{1}_{A_N}$ satisfies conditions (i) and (ii), and therefore the resulting compact extension has infinite ergodic index.

\begin{rmq}
We have given several examples of a compact extension of an infinite measure preserving system that is of infinite ergodic index. But we wonder if more general results could be found. Regarding the ergodic index, a significant difference between finite and infinite ergodic theory is the fact that for probability preserving systems, an ergodic index greater than or equal to $2$ is automatically infinite. This is not true in the infinite measure case, but we could consider an intermediate situation: take an extension $\bfZ \overset{\pi}{\to} \bfX$ of $\s$-finite infinite measure systems and assume that $\bfX$ has an infinite ergodic index. Is it possible that $\bfZ$ have a finite ergodic index greater than or equal to $2$ ? Within the family of extensions given by \cite{silva}, the answer is negative. Indeed, for an ergodic system with finite ergodic index of that form, it is shown that condition (i) must hold, so the condition (ii) has to fail (otherwise the ergodic index would be infinite). But \cite[Lemma 4.3]{silva} states that in that case, the system is not totally ergodic, so the 2-fold direct product cannot be ergodic: the ergodic index of the system is $1$.
\end{rmq}

\begin{description} [leftmargin=*] \item[Acknowledgments.]
The authors thank the anonymous referees for their suggestions, in particular regarding the examples given in Section \ref{sect:ext_infinte_erg_index}. 
\end{description}

\bibliographystyle{plain}
\bibliography{Biblio_poisson_extension}

\end{document}